  \newenvironment{proof}[1][Proof]{\par\noindent\textbf{#1. }\ignorespaces}{\hfill$\square$\par\medskip}
\numberwithin{equation}{section}
\newtheorem{theorem}{Theorem}
\newtheorem{lemma}{Lemma}[section]
\newtheorem{definition}{Definition}
\newtheorem{remark}{Remark}
\newtheorem{cond}{Condition}
\newtheorem{assumption}{Assumption}
\begin{document}

\title{Internal layer solutions and coefficient recovery in time-periodic reaction-diffusion-advection equations}
\author{Dmitrii Chaikovskii}
\address{MSU-BIT-SMBU Joint Research Center of Applied Mathematics, Shenzhen MSU-BIT University, Shenzhen 518172, People's Republic of China}
\email{dmitriich@smbu.edu.cn} 

\author{Ye Zhang}
\address{School of Mathematics and Statistics, Beijing Institute of Technology, Beijing, 100081, China}
\email{ye.zhang@smbu.edu.cn}
\author{Aleksei Liubavin*}\thanks{*Corresponding author}
\address{Department of Computational Mathematics and Cybernetics, MSU-BIT-SMBU Joint Research Center of Applied Mathematics, Shenzhen MSU-BIT University, Shenzhen 518172, People's Republic of China}
\email{la1992@mail.ru}

\date{DD.MM.YYYY} 
\udk{} 

\maketitle

\begin{fulltext}

\begin{abstract}
This article investigates the non-stationary reaction-diffusion- advection equation, emphasizing solutions with internal layers and the associated inverse problems. We examine a nonlinear singularly perturbed partial differential equation (PDE) within a bounded spatial domain and an infinite temporal domain, subject to periodic temporal boundary conditions. A periodic asymptotic solution featuring an inner transition layer is proposed, advancing the mathematical modeling of reaction-diffusion- advection dynamics. Building on this asymptotic analysis, we develop a simple yet effective numerical algorithm to address ill-posed nonlinear inverse problems aimed at reconstructing coefficient functions that depend solely on spatial or temporal variables. Conditions ensuring the existence and uniqueness of solutions for both forward and inverse problems are established. The proposed method's effectiveness is validated through numerical experiments, demonstrating high accuracy in reconstructing coefficient functions under varying noise conditions.
\end{abstract}

\begin{keywords}
Asymptotic expansion method, singular perturbed partial differential equations, inverse problem, regularization, numerical methods 
\end{keywords}

\markright{Coefficient recovery in time-periodic RDA equations}

\footnotetext[0]{This work was partially supported by the National Natural Science Foundation of China (No. 12350410359 \& No. 12171036), National Key Research and Development Program of China (No. 2022YFC3310300) and Beijing Natural Science Foundation (No. Z210001).}


\section{Introduction}

Reaction-diffusion-advection equations are fundamental in modeling dynamic phenomena in a variety of scientific disciplines, including biological systems \cite{Cos2014}, chemistry \cite{ComNegBrau2023}, and environmental sciences \cite{ZhaForGul2016, ZhaLinGul2017}. These equations become particularly insightful when dealing with small parameter values, which help to elucidate processes from  cell polarization \cite{Emken2020} to solute transport \cite{Erofeev2021}. This study focuses on reaction-diffusion-advection models where chemical reactions occur at a much faster rate than diffusion, resulting in sharp concentration gradients. Such models fall under the category of singularly perturbed PDEs, where the presence of a small parameter $\mu$ significantly alters the solution's behavior as $\mu$ approaches zero. We focus specifically on solutions characterized by moving fronts, which are critical for understanding phenomena such as the spread of environmental pollutants \cite{ManPim2020}.

To analyze these systems, we employ a matched asymptotic expansion method with multi-scale analysis, following the conventional techniques of asymptotic analysis \cite{Nefedov2013, Levashova2018}. This approach involves solving the PDE in two distinct regions: (a) outer solution: Valid away from regions where sharp gradients occur, where diffusion is relatively less significant compared to reaction; and (b) inner solution: Valid in regions with significant diffusion, where boundary layers or rapid transitions are observed. We then match the inner and outer solutions in overlapping regions to obtain a uniform approximation across the entire domain.  

In addition to studying the well-posedness and properties of the reaction-diffusion- advection equations, we address a parameter identification problem associated with these singularly perturbed systems. This inverse problem is nonlinear and ill-posed, posing challenges in practical applications.  Our findings have important implications for fields such as environmental pollution monitoring \cite{amp20221012}, water resource management \cite{Liang2010}, thermal oil recovery \cite{Got1965},  multiphase porous media flow \cite{Liu2023}, and materials science \cite{Tilioua2012}. Solving inverse problems and accurately determining unknown coefficients are crucial in these domains. Addressing the complexity of such problems, especially in identifying coefficient functions, remains a significant challenge (for further details, see \cite{Isa1990}).

As demonstrated in Section \ref{Statement}, numerous mathematical formulations exist for a given inverse problem. Some straightforward formulations, while easy to understand, can be challenging to solve in practice. In this paper, we use asymptotic analysis to derive two simple \textit{link equations}, cf. \eqref{preApprF0x} and \eqref{Eqf(t)}, between measurements and unknown parameters, significantly reducing the complexity of the original inverse problems. Our rigorous analysis proves that this model reduction is sufficiently accurate even in the presence of noise. This high-level idea has been applied recently to various inverse problems in perturbed PDEs; see, e.g. \cite{ChaZha2022, Chaikovskii2D2023, Chaikovskii3D2023, Lukyanenko2019, Lukyanenko2019v2, Lukyanenko2020, Lukyanenko2021, Lukyanenko2021v2}. 

The main contributions of this paper include the restoration of the coefficient function from the right-hand side, which may be time-dependent and periodic, and the formulation of conditions for the existence of solutions to the inverse problem. We also estimate the convergence rates for the inverse problem and derive the initial conditions necessary for numerical solutions using asymptotic analysis. Moreover, we established precise estimates for the location of the transition layer \( x_{t.p.}(t) \) by proving that it deviates from its leading-order approximation \( x_{0}(t) \) by at most \( C \mu |\ln \mu| \). This result provides accurate information about the position of the transition layer, enhancing the reliability of our asymptotic and numerical approximations. This research aims to advance the field by proving the existence of a unique, smooth solution for non-stationary, singularly perturbed differential equations with time-dependent right-hand sides. Our approach offers tractable approximations and valuable analytical insights into complex systems. We have developed a novel periodic asymptotic solution that incorporates an inner transition layer, enhancing the mathematical modeling of reaction-diffusion-advection dynamics. Furthermore, we have extended our asymptotic analysis algorithm to address inverse problems related to various coefficient functions. By deriving conditions for the existence and uniqueness of both forward and inverse solutions, we enhance the reliability and precision of our methods. In summary, this research bridges theoretical mathematical frameworks with practical applications, advancing the understanding and application of reaction-diffusion-advection equations across critical scientific and engineering fields.

This paper is structured as follows: Section \ref{Statement} introduces the mathematical formulations of the direct problem and two associated inverse problems, specifying the equations and assumptions used. Section \ref{asymptoticanalysis} provides the mathematical background and theoretical foundation necessary for the analysis. This section elaborates on the asymptotic expansion method, detailing the construction of outer and inner functions, as well as the derivation of equations governing the transition layer. Section \ref{mainresults} introduces the key theoretical results, including theorems on the existence and uniqueness of solutions, asymptotic estimates, and the development of regularization methods for addressing the inverse problems. Section \ref{examples} describes the numerical methods and computational techniques used to implement these theoretical results, offering examples that illustrate the practical application of the proposed methods in solving both direct and inverse problems under various scenarios. The paper concludes in Section \ref{conclusion}, with detailed proofs for certain claims provided in the appendices.

	\section{Problem statement and assumptions}
    \label{Statement}
    
	In this paper, we delve into key characteristics of the following time-dependent nonlinear reaction-diffusion-advection equation, subject to specific boundary and temporal conditions:
	\begin{equation}
    \label{InitProb}
		\begin{cases}
			\displaystyle\mu \left( \pdv[2]{u}{x} - \pdv{u}{t} \right) = -u\pdv{u}{x}+F(x,t)u, & x \in (0,1),\ t \in \mathbb{R}_+,\\
			u(0,t,\mu)=u^{0}(t),\ u(1,t,\mu)=u^{1}(t), & t \in \mathbb{R}_+,\\
			u(x,t,\mu)=u(x,t+T,\mu), & x \in (0,1),\ t \in \mathbb{R}_+,
		\end{cases}
	\end{equation}
where \(\mu\) represents a small parameter, emphasizing the equation's sensitivity and the perturbative nature of the study. The coefficient function \(F(x, t)\) is defined to be \(T\)-periodic and is assumed to maintain adequate smoothness across the domain \(\bar{D} = (0,1) \times \mathbb{R}_+\), mirroring the periodicity and smoothness requirements for \(u^{0}(t)\) and \(u^{1}(t)\).

In this work, we investigate the following direct problem (\textbf{DP}) and two inverse problems (\textbf{IP1} and \textbf{IP2}).

\begin{itemize}
\item \textbf{Direct Problem} (\textbf{DP}): Given functions $u^0(t)$, $u^1(t)$ and $F(x,t)$, study the well-posedness of non-linear PDE \eqref{InitProb}. 

\item \textbf{Inverse Problem 1} (\textbf{IP1}):  Given noisy gradient data $\{w^\delta_i \}_{i=0}^k$ of measurements $\{\frac{\partial u}{\partial x}(x_i,t_0)\}_{i=0}^k$ or noisy data $\{ u^\delta_i \}_{i=0}^k$ of $\{ u(x_i,t_0)\}_{i=0}^k$ for any fixed time $t_0 \in \mathbb{R}_+ $ on the grid
	\begin{equation*}
    \label{gridX}
		\Theta_x = \bigg\{ x_i=\dfrac{i}{k}: \ 0\leq i\leq k \bigg\}, 
	\end{equation*}
such that 
\begin{equation}
\label{noise1}
\max_{0\leq i\leq k} \bigg\{ |u^\delta_i - u(x_i,t_0)|, |w^\delta_i - \frac{\partial u}{\partial x}(x_i,t_0)| \bigg\} \leq \delta,
\end{equation}
recover the time-independent coefficient function $F(x,t)\equiv f(x)$ in \eqref{InitProb}.

\item \textbf{Inverse Problem 2} (\textbf{IP2}): Given noisy data $\{ u^{\delta}_{t.p.,i}, u^{1,\delta}_i, x^\delta_{0,i} \}_{i=0}^k$ of measurements $\{u^0(t_i), u^1(t_i), x_{t.p.}(t_i)\}_{i=0}^k$ ($x_{t.p.}(t_i)$ denotes the location of transition layer, with a rigorous definition of $x_{t.p.}(t)$ provided in Definition \ref{Defin1}) on the grid  
\begin{equation*}
    \label{gridT}
		\Theta_t=\bigg\{t_i=\dfrac{i \cdot T}{k}: \ 0\leq i\leq k \bigg\}
	\end{equation*}
such that 
\begin{equation}
\label{noise2}
\max_{0\leq i\leq k} \bigg\{ |u^{0,\delta}_i - u^0(t_i)|, |u^{1,\delta}_i - u^1(t_i)|, |x^\delta_{t.p.,i} - x_{t.p.}(t_i)| \bigg\} \leq \delta,
\end{equation}
recover the spatial-independent coefficient function $F(x,t)\equiv f(t)$ in \eqref{InitProb}. 
\end{itemize}

The two inverse problems under consideration present significant challenges in terms of solvability and uniqueness. Without additional assumptions, uniqueness is unattainable due to the discrepancy between the finite data space and the infinite solution space. However, uniqueness can be established under specific conditions (see, e.g., \cite{Shishatskii1988,Bukhgeim1993,Tataru2004,Isa1990}). These additional assumptions are crucial for sufficiently constraining the problem to ensure a unique solution. Furthermore, due to the unbounded nature of inverse differential operators, the inverse problems we consider, especially when dealing with noisy data, are ill-posed. This implies that small perturbations in the data can lead to significant deviations in the results obtained through conventional methods. To address the aforementioned issues of ill-posedness, a straightforward variational formulation of the inverse problems, incorporating Tikhonov regularization techniques, is typically employed. This approach transforms the problems (\textbf{IP1}) and (\textbf{IP2}) into the following two optimization problems with partial differential equation (PDE) constraints:
\begin{equation}
\label{control1}
\min_{F(x,t), u(x,t)} \sum^{k}_{i=1}   \left[ \lambda (u^\delta_i - u(x_i,t_0))^2 + (1-\lambda) (w^\delta_i - \frac{\partial u}{\partial x}(x_i,t_0))^2 \right] + \varepsilon\mathcal{R}(F),
\end{equation}
and 
\begin{equation}
\label{control2}
\min_{F, u} \sum^{k}_{i=1}  \left[ (u^{0,\delta}_i - u^0(t_i))^2 + (u^{1,\delta}_i - u^1(t_i))^2 + (x^\delta_{0,i} - x_0(t_i))^2 \right] + \varepsilon\mathcal{R}(F),
\end{equation}
where the state $u(x,t)$ solves the PDE \eqref{InitProb} with a given coefficient function $F(x,t)$, $\mathcal{R}(F)$ is the regularization term,  describing the a priori information of function $F(x,t)$, and $\varepsilon > 0$ is the regularization parameter. In optimization problem \eqref{control1}, $\lambda=0$ for gradient data $\{w^\delta_i \}_{i=0}^k$, while $\lambda=1$ when we have only data $\{ u^\delta_i\}_{i=0}^k$. 

However, this approach presents two significant challenges: selecting an appropriate regularization term and parameter and solving the resulting non-convex optimization problem with nonlinear PDE constraints. To address these issues, in this work, a new inverse method based on asymptotic analysis is proposed, aiming to develop a simpler yet accurate model that can effectively replace the complex PDE-constrained optimization problems \eqref{control1} and \eqref{control2}. 

To this end, we list some assumptions for our direct problem (\textbf{DP}) and two inverse problems (\textbf{IP1}) and (\textbf{IP2}). 

First, to ensure the integrity of solutions of (\textbf{DP})  and adherence to physical and mathematical principles, we establish the necessary conditions:
	
	\begin{assumption}\label{Cond1}
		Boundary values must comply with the inequalities ensuring physical consistency and mathematical stability:
		$$\max_{t \in \mathbb{R}_+} \left( u^0(t) \right) < \min \left\{0, -\max_{\substack{x \in [0,1] \\  t \in \mathbb{R}_+}} \left( \int\limits_{0}^{x} F(\tau,t)\, \mathrm{d}\tau \right) \right\} , $$ 
             $$ \min_{ t \in \mathbb{R}_+} \left(  u^1(t) \right) > \max \left\{0,\max_{\substack{x \in [0,1] \\  t \in \mathbb{R}_+}} \left( \int\limits_{x}^{1} F(\tau,t)\, \mathrm{d}\tau \right)\right\}, $$ $$ u^1(t) - u^0(t) >  2\mu^\theta, \quad  \text{for some fixed~} \theta\in(0,1), \quad \forall t \in \mathbb{R}_+. $$
	\end{assumption}

	\begin{assumption}\label{Cond2}
		We require a smooth \(t\)-periodic solution \(x = x_0(t)\) satisfying the following for the transition dynamics to be physically viable:
		\[
		I(x_0(t),t): = \qty(\varphi^{l}(x_0(t),t))^2 - \qty(\varphi^{r}(x_0(t),t))^2 = 0,  
		\]
		\[
		\pdv{ I(x_0(t),t)}{x_0} < 0, \quad 0 < x_0(t) < 1, 
		\]
		ensuring \(x_0(t)\) serves as a unique root of $I(x_0(t),t)=0$  for all \(t \in \mathbb{R}_+\).
	\end{assumption}
	
	\begin{assumption}\label{Cond3}
		The initial state of our moving front is defined as:
		\[
		u_{init}(x) = U_{0}(x, 0) + \mathcal{O}(\mu),
		\]
		with \(U_{0}\) introducing an initial asymptotic approximation for the moving front's shape and location.
	\end{assumption}

Under Assumptions \ref{Cond1}-\ref{Cond3}, by using the asymptotic analysis theory, we can ensure the existence of a unique smooth solution for the initial problem \eqref{InitProb}, cf. Theorems \ref{existenceTheorem} and \ref{asymptEstimates}. 

Note that for the inverse problem with the coefficient function depending only on the spatial variable, i.e.  \(F(x,t)=f(x)\), Assumptions \ref{Cond1}-\ref{Cond3} are sufficient. However, if the coefficient function is time-dependent (in this paper, we only consider the case \(F(x,t)=f(t)\)), the following additional assumption for precise transition layer localization is required: 
	
	\begin{assumption}\label{Cond4}
    There exists a number $a\in(0, 0.5)$ such that the position of the transition layer is restricted to:
		\[
		0 < x_{t.p.}(t) < 0.5-a \quad \text{or} \quad 0.5+a < x_{t.p.}(t) < 1, \quad t \in \mathbb{R}_+,
		\]
		ensuring a definite and physically meaningful placement within the spatial domain.
	\end{assumption}

	\section{Asymptotic analysis}
    \label{asymptoticanalysis}
	\subsection{Asymptotic representation of the solution}
	
	In this analysis, we introduce the standard asymptotic approximation for the solution to the problem delineated in \eqref{InitProb}:
	\begin{equation*}\label{U}
		U(x,t,\mu)=
		\begin{cases}
			U^l(x,t,\mu), & x\in \overline{\Omega}^l,\\
			U^r(x,t,\mu), & x\in \overline{\Omega}^r.
		\end{cases}
	\end{equation*}
	where
	\begin{equation}\label{AsySol}
		U^{l,r}=\bar{u}^{l,r}(x,t,\mu)+Q^{l,r}(\xi,t,\mu),
	\end{equation}
	and 
	\begin{align*}
		\overline{\Omega}^l := \{(x,t) : 0 \leqslant x \leqslant x_{t.p.}(t), t \in \mathbb{R}_+\},\\
		\overline{\Omega}^r := \{(x,t) : x_{t.p.}(t) \leqslant x \leqslant 1, t \in \mathbb{R}_+\}.
	\end{align*}
	In accordance with conventional theoretical practices, \(\bar{u} (x,t,\mu)\) denotes outer functions that depict the solution's behavior distant from the curve \(x_{t.p.}(t)\), and the inner functions \(Q (\xi,t,\mu)\), which describe the transition layer, with the stretched variable
	\begin{equation}\label{ScaledVar}
		\xi=\frac{x-x_{t.p.}(t)}{\mu},
	\end{equation}
	characterize it within a proximate neighborhood of \(x_{t.p.}(t)\).
	
	For areas excluding the boundary, we employ analogous notation:
	\begin{align*}
		\Omega^l := \{(x,t) : 0 < x < x_{t.p.}(t), t \in \mathbb{R}_+\},\\
		\Omega^r := \{(x,t) : x_{t.p.}(t) < x < 1, t \in \mathbb{R}_+\}.
	\end{align*}
	
	Our objective is to derive the solution for the problems presented in \eqref{InitProb}, characterized by moving front behavior. Such situations arise when, at each temporal moment within the interval \(0 \leqslant x \leqslant x_{t.p.}(t)\), the solution approximates \(\varphi^l(x, t)\), and within the interval \(x_{t.p.}(t) \leqslant x \leqslant 1\), it approximates \(\varphi^r(x, t)\). Consequently, there is a rapid transition of the solution from \(\varphi^l(x, t)\) to \(\varphi^r(x, t)\) in the vicinity of the curve \(x = x_{t.p.}(t)\). This is commonly termed as a solution with an internal layer. Following the idea in \cite{Antipov2014}, we formulate the following definition of the transition point, which will be used in the sharp error estimation of the quantity $|x_{t.p.}(t)-x_{0}(t)|$.   

    \begin{definition}\label{Defin1}
	The transition point \( x_{t.p.}(t) \) is defined as a point where the solution \( u(x, t, \mu) \) equals the average of the left and right outer solutions:
	\begin{equation}\label{barUatTP}
		{u}(x_{t.p.}(t), t,\mu) = \varphi(x_{t.p.}(t), t) := \frac{1}{2}(\varphi^l(x_{t.p.}(t),t)+\varphi^r(x_{t.p.}(t),t)), \ t \in \mathbb{R}_+.
	\end{equation}
    \end{definition}

    \begin{remark}
The point \( x_{t.p.}(t) \), defined by equation \eqref{barUatTP}, represents the location of the internal layer or moving front in the solution \( u(x, t, \mu) \). In general, multiple transition points may exist as per Definition \ref{Defin1}. However, for our specific equation \eqref{InitProb}, the transition point \( x_{t.p.}(t) \) suffices for our analysis, regardless of potential non-uniqueness. This is because it is sufficient that such a point lies within the transition layer region and satisfies the estimates \eqref{estim2} with respect to \( x_0(t) \).
    \end{remark}

	Hence, we infer that \(u(x)\) possesses an internal layer in the proximity of the curve \(x_{t.p.}\).
	
	The transitional curve is presented as an asymptotic series:
	\begin{equation*}\label{asyTP}
		X_{n}(t,\mu) = x_0(t) + \mu x_1(t) + \ldots + \mu^{n}x_{n}(t).
	\end{equation*}
	
	The functions \(\bar{u}^{l,r}(x,t,\mu)\) and \(Q^{l,r}(\xi,t,\mu)\) in \eqref{AsySol} are similarly represented in an asymptotic form:
	\begin{align}
		\label{asySerReg}
		\bar{u}^{l,r}(x,t,\mu)=\bar{u}^{l,r}_0(x,t)+\mu\bar{u}^{l,r}_1(x,t)+\ldots+\mu^n\bar{u}^{l,r}_n(x,t)+\ldots,\\
		\label{asySerInner}
		Q^{l,r}(\xi,t,\mu)=Q^{l,r}_0(\xi,t)+\mu Q^{l,r}_1(\xi,t)+\ldots+\mu^nQ^{l,r}_n(\xi,t)+\ldots .
	\end{align}
	
	The boundary condition for the functions  \(U^l(x,t,\mu)\) and \(U^r(x,t,\mu)\) could be define as
	\[
	U^l(x_{t.p.}(t),t,\mu) = U^r(x_{t.p.}(t),t,\mu) = \varphi(x_{t.p.}(t),t),
	\]
	and their derivatives normal to the curve \(x = x_{t.p.}(t)\), which is as defined in \eqref{barUatTP}, must be continuously matched along the curve \(x_{t.p.}(t)\) at every temporal instant \(t\):
	\begin{equation}\label{derSew}
		\pdv{U^l}{x} \qty(x_{t.p.}(t),t) = \pdv{U^r}{x} \qty(x_{t.p.}(t),t).
	\end{equation}

	\subsection{Outer functions}
	Initially, let us consider the equation derived from the initial problem delineated in \eqref{InitProb}:
	\begin{equation} \label{InitEq}
		\mu\qty(\pdv[2]{u}{x} - \pdv{u}{t} ) + u\pdv{u}{x} - F(x,t)u = 0.
	\end{equation}
	
	Substituting \eqref{asySerReg} into \eqref{InitEq} and equating terms with identical powers of \(\mu\), we obtain equations defining the outer functions \(\bar{u}_i\) in zero-order approximation:
	\begin{equation*}\label{regEqs}
		\begin{aligned}
			-F(x,t)\bar{u}_0^{l,r} + \bar{u}_0^{l,r} \pdv{\bar{u}_0^{l,r}}{x} = 0.
		\end{aligned}
	\end{equation*}
	
	Incorporating the boundary conditions from the initial problem \eqref{InitProb}, we stipulate:
	\begin{equation}\label{regProbL0}
		\begin{cases}
			\displaystyle \pdv{\bar{u}_0^l}{x} = F(x,t) , & x\in [0, 1],\; t \in \mathbb{R}_+,\\
			\bar{u}_0^l(0,t) = u^{0}(t), & t \in \mathbb{R}_+,
		\end{cases}
	\end{equation}
	and
	\begin{equation}\label{regProbR0}
		\begin{cases}
			\displaystyle \pdv{\bar{u}_0^r}{x} = F(x,t) , &  x\in [0, 1],\; t \in \mathbb{R}_+,\\
			\bar{u}_0^r(1,t) = u^{1}(t), & t \in \mathbb{R}_+.
		\end{cases}
	\end{equation}
	
	From the established equations \eqref{regProbL0} and \eqref{regProbR0}, the solutions can be deduced as follows:
	\begin{equation}\label{regSolL}
		\bar{u}_0^l(x,t) = \varphi^l(x,t) = u^{0}(t) + \int\limits_{0}^{x} F(\tau,t)\dd{\tau},\quad x\in [0, 1],\; t \in \mathbb{R}_+,
	\end{equation}
	\begin{equation}\label{regSolR}
		\bar{u}_0^r(x,t) = \varphi^r(x,t) = u^{1}(t) - \int\limits_{x}^{1} F(\tau,t)\dd{\tau},\quad x\in [0, 1],\; t \in \mathbb{R}_+.
	\end{equation}

	\subsection{The inner functions}
	Next, we consider equation \eqref{InitEq} in both regions $\Omega^{(l, r)}$ with corresponding boundary and initial conditions: 
	\begin{align*}
		&u(0,t) = u^{0}(t),\quad u(x_{t.p.}(t),t) = \varphi(x_{t.p.}(t),t), \quad &\text{for area $\Omega^l$}, \\
		&u(x_{t.p.}(t),t) = \varphi(x_{t.p.}(t),t),\quad u(1, t) = u^{1}(t), \quad &\text{for area $\Omega^r$},\\
		&u(x,t)=u(x,t+T), \qquad &x\in [0, 1],\; t \in \mathbb{R}_+,
	\end{align*}
	where function $\varphi(x_{t.p.}(t),t)$ is defined in \eqref{barUatTP}. 
	
	For simplicity, we introduce the following operator $L_{x}[u, \mu] := \displaystyle\mu\qty(\pdv[2]{u}{x} - \pdv{u}{t})$. It is possible to rewrite it in terms of $\xi$, which is defined as \eqref{ScaledVar}:
	\begin{equation}\label{Lmu}
		L_{\xi}[u, \mu] := \frac{1}{\mu} \, \pdv[2]{u}{\xi} +x'_{t.p.}(t)\pdv{u}{\xi} -\mu\pdv{u}{t}.
	\end{equation}
	
	Substitution of \eqref{Lmu} into equation \eqref{InitEq} gives us the following expression:
	\[
		Lf_{\xi}[u, \mu] := \frac{1}{\mu}\pdv[2]{u}{\xi} + x'_{t.p.}(t)\pdv{u}{\xi}- {\mu}\pdv{u}{t}+\frac{1}{\mu}u\pdv{u}{\xi}=f(\mu\xi + {x}_{t.p.}(t),t)u.
	\]
	
	By finding the difference $Lf_{\xi}[\bar{u}+Q, \mu] - Lf_{\xi}[\bar{u}, \mu] $, let us define the equation for $Q$:
	\begin{equation}\label{QEq}
		\frac{1}{\mu}\pdv[2]{Q}{\xi} + x_0'(t)\pdv{Q}{\xi}- {\mu}\pdv{Q}{t}+\frac{1}{\mu}{\bar{u}}\pdv{Q}{\xi} + \frac{1}{\mu}Q\pdv{(\bar{u}+Q)}{\xi} = f(\mu\xi + {x}_{t.p.}(t),t)Q.
	\end{equation}
	
	Let us split functions into Taylor's series of the powers of $\mu$ and substitute them with \eqref{asySerReg} and \eqref{asySerInner}  into \eqref{QEq}, and equate the coefficients for the small parameter $\mu^{-1}$. And since $ \varphi^{l,r}$ are not depending on $\xi $, we obtain an equation for the inner functions $Q_0$ in the zero-order approximation:
	\begin{equation}\label{QEq0}
		\pdv[2]{(\varphi^{l,r}+Q_0^{l,r})}{\xi} = -(\varphi^{l,r}+Q_0^{l,r})\pdv{(\varphi^{l,r}+Q_0^{l,r})}{\xi}.
	\end{equation}
	
	At this point, it is convenient to introduce the notation:
	\begin{equation}\label{tildaU}
		\tilde{u}(\xi, t) =
		\begin{cases}
		\begin{aligned}
			&\varphi^l(x_0(t),t)+Q_0^l(\xi,t), \quad &\xi<0,\\
			&\varphi(x_0(t),t), \qquad &\xi=0,\\
			&\varphi^r(x_0(t),t)+Q_0^r(\xi,t), \quad &\xi>0,
		\end{aligned}
		\end{cases}
	\end{equation}
	where $ \varphi^l<\tilde{u}<\varphi^r$. Here we should note that in the case of zero approximation, the equation \eqref{barUatTP} changes to
	\[
		\varphi(x_0(t),t)=\frac{\varphi^l(x_0(t),t)+\varphi^r(x_0(t),t)}{2}.
	\]
	
	By using replacements \eqref{tildaU} and $\displaystyle\frac{\partial \tilde{u}}{\partial\xi}= g(\tilde{u})$ we can reduce equation \eqref{QEq0} to the next form:
	\begin{equation}\label{gOfTildaU}
		\pdv{g(\tilde{u})}{\tilde{u}} = -\tilde{u}.
	\end{equation}
	
	Let us integrate the left and right sides of \eqref{gOfTildaU} when $\xi<0$:
	\[
		\int_{\varphi^l}^{\tilde{u}} \pdv{g(\tau)}{\tau} \dd{\tau} \equiv g(\tilde{u})-g(\varphi^l)  = -\int_{\varphi^l}^{\tilde{u}} \tau \dd{\tau}.
	\]
	
	Since $ \varphi^{(l)}$ does not depend on $\xi $, then $g(\varphi^l)=0$. Therefore,  it is possible to find  
	\begin{equation}\label{QL0Int}
		\pdv{Q_0^l}{\xi}=\pdv{\tilde{u}}{\xi} = g(\tilde{u}) = -\int_{\varphi^l}^{\tilde{u}} \tau \dd{\tau}.
	\end{equation}
	
	Similarly, for $\xi>0$ we have:
	\begin{equation}\label{QR0Int}
		\pdv{Q_0^r}{\xi}=\pdv{\tilde{u}}{\xi} = g(\tilde{u}) = \int_{\tilde{u}}^{\varphi^r} \tau \dd{\tau}.
	\end{equation}
	
	Let us write the matching conditions \eqref{derSew} in the zero approximation:
    \[
		\pdv{Q_0^r}{\xi} - \pdv{Q_0^l}{\xi} = 0.
	\]
	
	Taking into account \eqref{QL0Int} and  \eqref{QR0Int}, we obtain:
	\begin{equation}\label{halfSqrtDifference}
		\int_{\varphi^l}^{\tilde{u}} \tau \dd{\tau} + \int_{\tilde{u}}^{\varphi^r} \tau \dd{\tau} = \int_{\varphi^l(x_0(t),t)}^{\varphi^r(x_0(t),t)} \tau d\tau = \frac{1}{2}\qty({(\varphi^r(x_0(t),t))^2}-{(\varphi^l(x_0(t),t))^2}) = 0.
	\end{equation}

	Let us substitute equations \eqref{regSolL} and \eqref{regSolR} into \eqref{halfSqrtDifference}:
	\begin{equation}\label{findx0}
		\frac{1}{2}\qty(\qty(u^{1}(t)+\int_{1}^{x_0(t)} F(\tau,t)d\tau)^2 - \qty(u^{0}(t) + \int_{0}^{x_0(t)} F(\tau,t)d\tau)^2)=0.
	\end{equation}
	
	From Assumption \ref{Cond2} follows the existence of solution $x_0(t)$ to the equation \eqref{findx0}, which describes the position of the middle of the transition layer, necessary for constructing an asymptotic solution. In the general case, this equation is solved numerically. If $F(x,t)=f(t) $ we can write the formula for $x_0(t)$ explicitly:
	\begin{equation}\label{x0}
		x_0(t) = -\frac{u^{0}(t)+u^{1}(t)-f(t)}{2f(t)}.
	\end{equation}
	
	To calculate $Q_0(\xi,t)$, we integrate equations \eqref{QL0Int} and \eqref{QR0Int} substituting the value of the auxiliary function $\tilde{u}$; thus we obtain:
	\[
		\pdv{Q^{l,r}(\xi,t)}{\xi} = -\frac{1}{2}(Q^{l,r}(\xi,t)^2 + 2\varphi^{l,r}(x_0(t),t) Q^{l,r}(\xi,t) ).
	\]
	
	If we pair them with the following conditions
	\begin{align*}
		Q_0^l(-\infty,t)=0,\quad Q_0^l(0,t)=\varphi(x_0(t),t)-\varphi^l(x_0(t),t),\\
		Q_0^r(+\infty,t)=0,\quad Q_0^r(0,t)=\varphi(x_0(t),t)-\varphi^r(x_0(t),t),
	\end{align*}
	
	we can find $Q_0^l(\xi,t)$ and $Q_0^r(\xi,t)$ explicitly: \\
	\begin{equation*}\label{coefQ0L}
		Q_0^l(\xi,t) = \dfrac{-2\varphi^l(x_0(t),t)\displaystyle \exp(-\Phi_l)}{\displaystyle \exp(-\Phi_l) - \exp(\varphi^l(x_0(t),t)\xi)},
	\end{equation*}
	\begin{align*}
		Q_0^r(\xi,t)=\dfrac{-2\varphi^r(x_0(t),t)\displaystyle \exp(-\Phi_r)}{ \exp(-\Phi_r) - \exp(\varphi^r(x_0(t),t)\xi)},
	\end{align*}
	where
	\[
		\Phi_l = \ln\frac{\varphi^r+3\varphi^l}{\varphi^r-\varphi^l}, \qquad
		\Phi_r = \ln\frac{\varphi^l+3\varphi^r}{\varphi^l-\varphi^r}.
	\]
	
	So, we can write the expression for the asymptotic solution in the zero approximation:
	\begin{multline*}\label{solU0}
		U_0 (x,t)= \\
        \begin{cases}
			\varphi^l(x,t)+\displaystyle\frac{-2\varphi^l(x_0(t))}{1-\displaystyle\frac{\varphi^r(x_0(t))+3\varphi^l(x_0(t))}{\varphi^r(x_0(t))-\varphi^l(x_0(t))}\displaystyle \exp(\varphi^l(x_0(t))\displaystyle\frac{x-x_0(t)}{\mu})}, \  x \in [0,x_0(t)], \\[1cm]
			\varphi^r(x,t)+\displaystyle\frac{-2\varphi^r(x_0(t))}{1-\displaystyle\frac{\varphi^l(x_0(t))+3\varphi^r(x_0(t))}{\varphi^l(x_0(t))-\varphi^r(x_0(t)}\displaystyle \exp(\varphi^r(x_0(t))\displaystyle\frac{x-x_0(t)}{\mu})}, \ x \in [x_0(t),1] .
		\end{cases}
	\end{multline*}

	\section{Main results}
    \label{mainresults}

\subsection{Well-posedness of (\textbf{DP})}
    
	\begin{theorem}[(Existence and asymptotic solution)]
\label{existenceTheorem}
Suppose that $F(x,t)\in C^{n+3,n+2}$ $([0,1], \mathbb{R})$, where $n$ is an order of asymptotic approximation, $u_{init} (x) \in C^{n+3}[0,1]$, $u^{0}(t),u^{1}(t) \in C^{n+2}( \mathbb{R})$, and $\mu \ll 1$. If Assumptions \ref{Cond1} - \ref{Cond3} are satisfied, then the boundary value problem \eqref{InitProb} has a unique smooth solution with an internal transition layer. 
\end{theorem}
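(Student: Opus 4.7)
The plan is to treat \eqref{InitProb} as a quasilinear parabolic initial–boundary value problem on the cylinder $\bar D$ with prescribed initial datum $u(x,0)=u_{init}(x)$ supplied by Assumption \ref{Cond3}, and then to construct ordered upper and lower solutions bracketing the asymptotic expansion so that the classical comparison-based existence theorem for quasilinear parabolic problems applies. Dividing the PDE in \eqref{InitProb} by $-\mu$ puts it into the uniformly parabolic form $u_t = u_{xx} + \mu^{-1}\bigl(u u_x - F(x,t)u\bigr)$ with smooth coefficients, so once $L^\infty$ bounds are known, standard parabolic Schauder theory (Ladyzhenskaya–Solonnikov–Uraltseva) yields a classical solution globally on $\bar D$ with the Hölder regularity claimed in the theorem inherited from the smoothness of $F$, $u^0$, $u^1$, $u_{init}$.

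First, I would extend the zero-order asymptotic construction of Section \ref{asymptoticanalysis} to all orders $k=1,\dots,n$: the outer functions $\bar u^{l,r}_k$ arise from linear transport-type equations in $x$, the inner correctors $Q^{l,r}_k(\xi,t)$ solve linearizations of \eqref{QEq} (linear ODEs in $\xi$ with exponentially decaying inhomogeneities), and matching of normal derivatives at $x_{t.p.}(t)$ generates a hierarchy of solvability conditions that determine the higher-order corrections $x_k(t)$ to the transition curve. The resulting $U_n(x,t,\mu)$ satisfies \eqref{InitProb} with residual of order $\mu^{n+1}$ and the boundary conditions exactly, while Assumption \ref{Cond3}, interpreted as matching $u_{init}$ to the asymptotic expansion up to the prescribed order, gives $|u_{init}(x)-U_n(x,0,\mu)|=O(\mu^{n+1})$.

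Next, following the classical asymptotic barrier method, define
\[
\beta(x,t,\mu)=U_{n+1}\bigl(x+\mu^{n+1}\eta(t),\,t,\,\mu\bigr)+\mu^{n+1}\bigl(q(\xi,t)+C\bigr),
\]
and symmetrically $\alpha$ with opposite shift, where $\eta(t)$ is a smooth small perturbation of the front, $C>0$ is a large constant, and $q(\xi,t)\geq 0$ is a localized corrector that repairs the one-sided derivative jump at $x=x_{t.p.}(t)$ so that $\alpha,\beta$ qualify as piecewise sub- and super-solutions. The verifications are: (i) the differential inequality for the PDE, which inside the transition layer reduces to a strict sign requirement ensured by the structural condition $\partial_{x_0}I(x_0(t),t)<0$ from Assumption \ref{Cond2}; (ii) the initial ordering $\alpha(x,0)\leq u_{init}(x)\leq \beta(x,0)$, which follows from Assumption \ref{Cond3} once $C$ is chosen large enough; and (iii) the boundary ordering $\alpha\leq u^{0,1}(t)\leq\beta$ at $x=0,1$, which is built in by the sign inequalities of Assumption \ref{Cond1}.

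The classical existence–comparison theorem for quasilinear parabolic problems with ordered sub- and super-solutions then delivers a classical solution $u\in C^{2,1}(\bar D)$ of the IBVP with $\alpha\leq u\leq\beta$; consequently $|u-U_n|\leq C\mu^{n+1}$ uniformly, and $u$ inherits the internal transition-layer structure of $U_n$. Uniqueness follows by linearizing $-u u_x+Fu$ around two candidate solutions and applying the parabolic maximum principle to their difference (which starts from zero with zero boundary data), and bootstrapping Schauder estimates upgrade $u$ to the regularity class stated in the theorem. The main obstacle is the sign verification (i) within the thin layer, where derivatives of $U_n$ and the quasilinear term $u u_x$ are both $O(\mu^{-1})$: one must exploit the explicit form of $Q_0^{l,r}$ recorded in Section \ref{asymptoticanalysis} together with the strict inequality $\partial_{x_0}I<0$ to choose $\eta(t)$ so that the induced front shift absorbs the $O(\mu^{n+1})$ residual; this is precisely the step that consumes the smoothness hypothesis $F\in C^{n+3,n+2}$.
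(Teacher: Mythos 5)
Your proposal follows essentially the same route as the paper: the asymptotic method of differential inequalities, with upper and lower solutions built from $U_{n+1}$ by shifting the transition curve by an $O(\mu^{n+1})$ amount and adding $\mu^{n+1}$-order outer and layer correctors chosen to enforce the sign of the residual and to repair the one-sided derivative jump at the front. The only notable difference is that you phrase the comparison step for the initial--boundary value problem, whereas the paper verifies its conditions (ordering, differential inequalities, boundary inequalities, derivative-jump condition) in the time-periodic setting, but the barrier construction and the roles of Assumptions \ref{Cond1}--\ref{Cond3} coincide.
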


\begin{theorem}[(Asymptotic estimates)]
\label{asymptEstimates}
Under the same conditions as Theorem \ref{existenceTheorem}, the solution of the boundary value problem \eqref{InitProb} can be approximated by the asymptotic series:
\begin{equation*}\label{mainSol}
	U_{n}(x,t,\mu)=
	\begin{cases}                     
		U_{n}^l(x,t,\mu)=\sum\limits_{i=0}^n\mu^i(\bar{u}_i^l(x)+Q_i^l(\xi_n,t)),\quad(x,t)\in[0,X_{n}(t,\mu)]\times\mathbb{R}_+,\\[8pt]
		U_{n}^r(x,t,\mu)=\sum\limits_{i=0}^n\mu^i(\bar{u}_i^r(x)+Q_i^r(\xi_n,t)),\quad(x,t)\in[X_{n}(t,\mu),1]\times\mathbb{R}_+,
	\end{cases} 
\end{equation*}
where $\xi_n=\frac{x-X_{n}(t,\mu)}{\mu}$.

Furthermore, it has the next asymptotic estimates:
\begin{align}
	&\forall(x,t)\in [0,1]\times\mathbb{R}_+:|u(x,t)-U_{n}(x,t,\mu)|\leqslant \mathcal{O}(\mu^{n+1}), \label{estim1} \\
	&\forall (x,t) \in [0,1] \backslash \{x_{t.p.}(t)\}\times\mathbb{R}_+: \abs{\pdv{u(x,t)}{x} - \pdv{U_n(x,t,\mu)}{x}} \leqslant \mathcal{O}(\mu^{n}). \label{estim3}
\end{align}

In the zero-order case, $U_0$ could be defined as
\begin{equation*}
	U_0(x,t)=
	\begin{cases}                     
		\varphi^l(x,t)+Q_0^l(\xi_0,t),\quad(x,t)\in[0,x_0(t)]\times\mathbb{R}_+,\\[5pt]
		\varphi^r(x,t)+Q_0^r(\xi_0,t),\quad(x,t)\in[x_0(t),1]\times\mathbb{R}_+.
	\end{cases} 
\end{equation*}

For the zero-order asymptotic solution there exist constants $C$ independent of $\mu$, $x$, and $t$ within our domain (excluding the narrow transition area, where the width $\Delta x=\hat{x}^r(t, \mu)-\hat{x}^l(t, \mu) \sim \mu | \ln \mu|$, with $\hat{x}^r(t, \mu)$ and $\hat{x}^r(t, \mu)$  representing the boundaries of the transition layer and defined in \eqref{hatxl}, \eqref{hatxr}), such that:
\begin{align} 
	&\abs{u(x,t)-\varphi^l(x,t)} \leqslant C\mu, \quad (x,t) \in [0,\hat{x}^l(t, \mu)] \times \mathbb{R}_+, \label{leftestimatevarphi}\\
	&\abs{u(x,t)-\varphi^r(x,t)} \leqslant C\mu, \quad (x,t) \in [\hat{x}^r(t, \mu),1] \times \mathbb{R}_+,\label{rightestimatevarphi}\\
	&\abs{\pdv{u(x,t)}{x} - \dv{\varphi^l(x,t)}{x}} \leqslant C\mu, \quad (x,t) \in [0,\hat{x}^l(t, \mu)] \times \mathbb{R}_+,\\
	&\abs{\pdv{u(x,t)}{x} - \dv{\varphi^r(x,t)}{x}} \leqslant C\mu, \quad (x,t) \in [\hat{x}^r(t, \mu),1] \times\mathbb{R}_+. \label{estim6}
\end{align}

Moreover, there exists an additional estimate for the location of the transition layer:
\begin{align} 
 &\forall t \in \mathbb{R}_+:|x_{t.p.}(t)-x_{0}(t)|\leqslant C_x\mu | \ln \mu |, \label{estim2} 
\end{align}
where $C_x$ is a constant indepedent of $x$ and $t$. 
\end{theorem}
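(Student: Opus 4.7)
The plan is to combine the standard matched-asymptotic construction from Section \ref{asymptoticanalysis} with the upper/lower-solution technique for periodic parabolic problems (in the spirit of \cite{Nefedov2013, Levashova2018}). First, I would construct the higher-order terms of the series \eqref{asySerReg}--\eqref{asySerInner} recursively: substituting into \eqref{InitEq} and equating coefficients of $\mu^i$ yields linear problems for $\bar{u}^{l,r}_i$ (algebraic in the outer region, solved as in \eqref{regProbL0}--\eqref{regProbR0}) and linear singularly perturbed problems for $Q^{l,r}_i$ of the form $\partial_\xi^2 Q^{l,r}_i + \partial_\xi\bigl((\varphi^{l,r}+Q^{l,r}_0) Q^{l,r}_i\bigr) = R_i$, whose right-hand sides $R_i$ are known from previous steps and whose solvability fixes the coefficient $x_i(t)$ in the expansion $X_n(t,\mu)$ (using Assumption \ref{Cond2} for invertibility of the linearized matching condition). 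Exponential decay of $Q^{l,r}_i$ in $|\xi|$ is proved inductively from the structure of \eqref{QL0Int}--\eqref{QR0Int} and the fact that $|\varphi^{l,r}(x_0,t)|$ is bounded away from zero by Assumption \ref{Cond1}.

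Second, to prove \eqref{estim1}, I would form the modified pair $U_n^{\pm}(x,t,\mu) = U_{n+1}(x,t,\mu) \pm \mu^{n+1}\bigl(\alpha + \beta Q^{l,r}_{*}(\xi,t)\bigr)$, where the constants $\alpha,\beta$ are chosen so that $U_n^-$ (resp.\ $U_n^+$) is a lower (resp.\ upper) solution of \eqref{InitProb}: insertion into the operator produces a leading term of sign controlled by Assumption \ref{Cond2} (which makes $\partial I/\partial x_0<0$ supply the necessary sign at the matched derivative jump), while the $\mu^{n+2}$ residuals are absorbed into the leading term for small $\mu$. The ordering $U_n^- \le u \le U_n^+$ then follows from the maximum principle for the periodic parabolic operator in \eqref{InitProb}; together with $|U_{n+1}-U_n|=O(\mu^{n+1})$, this yields \eqref{estim1}. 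The gradient bound \eqref{estim3} is obtained by applying interior Schauder estimates to the error $v=u-U_n$, which satisfies a linear parabolic equation with $O(\mu^{n+1})$ forcing and $O(\mu^{n+1})$ Dirichlet data, localized away from $x_{t.p.}(t)$.

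Third, the zero-order estimates \eqref{leftestimatevarphi}--\eqref{estim6} outside the transition layer follow from \eqref{estim1}, \eqref{estim3} with $n=0$ by noting that $Q_0^{l,r}(\xi,t)$ decays like $\exp(\varphi^{l,r}(x_0(t),t)\xi)$; the points $\hat{x}^{l,r}(t,\mu)$ are defined as the largest (resp.\ smallest) $x$ at which $|Q_0^{l,r}|=\mu$, giving $|x-x_0(t)|\sim \mu|\ln\mu|/|\varphi^{l,r}(x_0(t),t)|$, whence the layer width $\mu|\ln\mu|$.

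Finally, the sharp localization \eqref{estim2} is the delicate step. By Definition \ref{Defin1}, $G(x,t):=u(x,t,\mu)-\tfrac12(\varphi^l(x,t)+\varphi^r(x,t))$ vanishes at $x=x_{t.p.}(t)$. Outside the transition layer, \eqref{leftestimatevarphi}--\eqref{rightestimatevarphi} together with Assumption \ref{Cond1} imply $|G|\ge c>0$ for some $c$ independent of $\mu$; hence $x_{t.p.}(t)$ must lie in the narrow strip $[\hat{x}^l(t,\mu),\hat{x}^r(t,\mu)]$, whose width is $O(\mu|\ln\mu|)$ by the previous step, and $x_0(t)$ also lies in this strip by construction. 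This yields \eqref{estim2} with $C_x$ determined by the exponential decay rate and the lower bound on $|\varphi^{r}(x_0(t),t)-\varphi^{l}(x_0(t),t)|$ from Assumption \ref{Cond2}. The main obstacle I anticipate is the construction of the upper/lower solutions across the moving interface $X_{n}(t,\mu)$: the jump of the first derivative must be absorbed into the sign of the corrector, which requires the careful choice of $\beta$ driven by Assumption \ref{Cond2}, and the periodicity requires working with the periodic (rather than initial-value) comparison principle.
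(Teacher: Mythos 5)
Your overall strategy coincides with the paper's: higher-order terms built recursively with the matching condition fixing $x_i(t)$, the sup/sub-solution (comparison principle) argument for \eqref{estim1}, parabolic regularity for \eqref{estim3} (you propose interior Schauder estimates where the paper uses a Green's-function representation of $z_n=u-U_n$ following \cite{ChaZha2022}; both lose exactly one power of $\mu$ and are equally standard), the explicit exponential form of $Q_0^{l,r}$ to define $\hat{x}^{l,r}$ and obtain \eqref{leftestimatevarphi}--\eqref{estim6}, and the sign-change/intermediate-value argument for $u-\tfrac12(\varphi^l+\varphi^r)$ in the strip $[\hat x^l,\hat x^r]$ to get \eqref{estim2}. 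Your version of the last step, using that $\varphi^r-\varphi^l$ is bounded below by a $\mu$-independent constant, is if anything cleaner than the paper's, which only invokes the weaker bound $\varphi^r-\varphi^l\geq 2\mu^\theta$ from Assumption \ref{Cond1}.

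There is, however, one concrete step that would fail as written: the barrier ansatz $U_{n+1}\pm\mu^{n+1}(\alpha+\beta Q^{l,r}_*)$ with \emph{constant} outer corrector $\alpha$. In the outer region the zero-order balance is degenerate, $\partial_x\varphi^{l,r}=F$, so substituting $\varphi^{l,r}+\mu^{n+1}\alpha$ into the operator gives, at order $\mu^{n+1}$, the term $\alpha\,\partial_x\varphi^{l,r}-F\alpha+\varphi^{l,r}\partial_x\alpha=\varphi^{l,r}\partial_x\alpha$, which vanishes for constant $\alpha$. Hence $L[\beta]$ has no sign-definite $O(\mu^{n+1})$ contribution to dominate the $O(\mu^{n+2})$ residual, and condition (C2) cannot be verified. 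The paper avoids this by taking $x$-dependent correctors $\epsilon^{l,r}(x,t)$ solving the first-order linear ODE $\epsilon(\partial_x\varphi^{l,r}-f)+\varphi^{l,r}\partial_x\epsilon=R$ with $R>0$, which manufactures the needed term $-\mu^{n+1}R$. Relatedly, the derivative-jump condition (C4) at the interface is handled in the paper not by tuning the amplitude $\beta$ of the layer corrector but by shifting the transition curve itself, $x_\beta=X_{n+1}-\mu^{n+1}\delta(t)$, with $\delta(t)$ chosen from the matched values of $\epsilon^{l,r}$; you correctly identify this as the delicate point but the mechanism you propose (choice of $\beta$) is not the one that closes the argument. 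Both issues are repairable without changing your overall plan.
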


    \begin{remark}
Actually, the constant $C_x$ encapsulates the higher-order terms and could be calculated as:
$$
C_x=\sup_{t \in \mathbb{R}_+}\left|\sum_{i=1}^{\infty}\mu^i x_i(t)\right|.
$$
The general equation for the values $x_i(t) $ can be obtained from the matching condition \eqref{derSew} using an approach similar to that presented in \cite{ChaZha2022}. However, due to space limitations, the detailed derivation is omitted here.
    \end{remark}
    
      \begin{remark}
In Assumption \ref{Cond1}, the condition
$ u^1(t) - u^0(t) > 2\mu^\theta \ \text{for} \ \theta \in (0,1) $
is only necessary to establish the estimate \eqref{estim2}. According to our proof in the Appendix, for the estimates \eqref{leftestimatevarphi}$-$\eqref{estim6}, a weaker condition $ u^1(t) - u^0(t) > 2\mu^2 $ is sufficient.
    \end{remark}

	The proofs for Theorems \ref{existenceTheorem} and \ref{asymptEstimates} are quite technical and some parts are similar to proofs given in \cite{Nefedov2013, Levashova2018} and \cite{ChaZha2022}, and hence is postponed in the appendix.

\subsection{An efficient regularization method for (\textbf{IP1})}

Utilizing the asymptotic approximation of the direct problem, we designed an efficient algorithm for determining the function $f(x)$. The fundamental idea of this new algorithm is to substitute the reaction-diffusion-advection equation \eqref{InitProb} with a simplified expression~-- while maintaining the same order of accuracy~-- from the systems \eqref{regProbL0} and \eqref{regProbR0}. We refer to this type of relation as the intrinsic connection between the unknown parameter and the measurement. 

To that end, let us define the pre-approximative function:
	\begin{equation}
    \label{preApprF0x}
		f_0(x)=\frac{\partial u(x,t_0)}{\partial x}.
	\end{equation}
	
	Assume we have a deterministic model \eqref{noise1} between noisy data $\omega_i^\delta$ and accurate data $\displaystyle\pdv{u(x_i,t_0)}{x}$ for a fixed time $t_0$ and at grid nodes $\Theta_x$. The maximum grid size is $h=\max_{i\in0,\ldots,k-1}\{(x_{i+1}-x_i)\}$.
	
	Now instead of optimization problem \eqref{control1} with high order PDE constraint, we restore the original function $f^\delta(x)$ by solving the following simper minimization problem: 
	\begin{equation}\label{minProbFdelta}
		f^\delta(x)=\arg\mathop{\min}_{f\in C^1(0,1)}\frac{1}{k+1}\sum_{i=0}^{k}{(f(x_i)-\omega_i^\delta)^2}.
	\end{equation}
	
	Additionally, in cases where only noisy measurements $u_i^\delta$ are obtained, it is possible to define $\omega_i^\delta$ in \eqref{minProbFdelta} as a smoothed function $\displaystyle\pdv{u^\epsilon}{x}$. We construct $u^\epsilon(x_i,t_0)$ based on the following minimization problem: 
	\begin{equation}\label{uEps}
		u^\epsilon(x,t_0) = \arg\mathop{\min}_{s\in C^1(0,1)}\frac{1}{k+1}\sum_{i=0}^k(s(x_i,t_0)-u^\delta(t_0))^2+\epsilon(t_0)\norm{\pdv[2]{s(x,t_0)}{x}}^2_{L^2(\Omega)}.
	\end{equation}
  
	Here $\epsilon$ is the regularization parameter and it is chosen in such a way that   $u^\epsilon(x,t_0)$ satisfies the condition:
	\begin{equation*}
		\frac{1}{k+1}\sum_{i=0}^k(u^\epsilon(x_i,t_0)-u_i^\delta(t_0))^2=\delta^2.
	\end{equation*}

	\begin{lemma} 
    \label{lemma1} 
    Let $f$ be the exact function satisfying the equation \eqref{InitProb}. Then, subject to Assumption \ref{Cond2}, there exists a constant $C_1$ that does not depend on the variables $\mu$ and $x$, and it satisfies the following condition:
	\begin{equation}
    \label{estimlemma1}
		\norm{f-f_0 }_{L_p(\Omega)} \leq C_1\mu \abs{ \ln(\mu)}, \quad \forall{p\in (0,+\infty)}.
	\end{equation}
	\end{lemma}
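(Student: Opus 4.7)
The plan is to rewrite $f - f_0$ using the identity $f(x) = \partial_x\varphi^{l,r}(x,t_0)$, which follows from the zero-order outer equations \eqref{regProbL0}$-$\eqref{regSolR} applied to $F(x,t)\equiv f(x)$. This recasts the error $f - f_0 = \partial_x\varphi^{l,r}(\cdot,t_0) - \partial_x u(\cdot,t_0)$ as exactly the difference between the leading outer derivative and the true derivative, which is precisely what Theorem \ref{asymptEstimates} controls on the outer regions.

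First, using Theorem \ref{asymptEstimates} I would decompose $[0,1]$ at $t=t_0$ into the outer set $\Omega_{\mathrm{out}} = [0,\hat{x}^l(t_0,\mu)]\cup[\hat{x}^r(t_0,\mu),1]$ and the transition strip $\Omega_{\mathrm{in}} = (\hat{x}^l(t_0,\mu),\hat{x}^r(t_0,\mu))$ of length at most $C\mu|\ln\mu|$. On $\Omega_{\mathrm{out}}$ the pointwise bounds \eqref{leftestimatevarphi}$-$\eqref{estim6} immediately give $|f(x)-f_0(x)| \leq C\mu$, and since $|\Omega_{\mathrm{out}}| \leq 1$, this already yields $\|f-f_0\|_{L_p(\Omega_{\mathrm{out}})} \leq C\mu$ for every $p\in(0,\infty)$.

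Second, on $\Omega_{\mathrm{in}}$ I would bound $|f-f_0|$ via the explicit exponential form of $Q_0^{l,r}$ extracted from \eqref{QL0Int}$-$\eqref{QR0Int}, substituting the stretched variable $\xi = (x-x_0(t_0))/\mu$ so that $\partial_x u \approx f + \mu^{-1}\partial_\xi Q_0$. Exponential decay of $\partial_\xi Q_0$ in $|\xi|$ together with the width bound $\hat{x}^r-\hat{x}^l \leq C\mu|\ln\mu|$ from \eqref{estim2} should then produce $\|f-f_0\|_{L_p(\Omega_{\mathrm{in}})} \leq C\mu|\ln\mu|$; summing this with the outer contribution closes \eqref{estimlemma1}.

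The main obstacle is this second step. Pointwise, $|f-f_0|$ is of order $\mu^{-1}$ inside the transition strip, since $\partial_x u$ must absorb a jump of size $|\varphi^r-\varphi^l| = O(1)$ across a $\mu$-thick layer, so any naive pointwise bound fails to deliver the claimed $O(\mu|\ln\mu|)$ rate, and for large $p$ the $\mu^{-1}$ spike is in tension with the $\mu|\ln\mu|$ support. The rate must instead be extracted by pairing the exponential concentration of $\partial_\xi Q_0$ against the sharp layer width $\hat{x}^r-\hat{x}^l = O(\mu|\ln\mu|)$ guaranteed by \eqref{estim2}; Assumption \ref{Cond2} (non-degeneracy of $x_0(t)$) is needed precisely so that this width estimate is uniform in $t$, hence in the chosen $t_0$.
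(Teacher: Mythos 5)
Your decomposition of $[0,1]$ into the two outer pieces and the transition strip, and your treatment of the outer pieces via $f=\partial_x\varphi^{l,r}$ together with the derivative estimates \eqref{leftestimatevarphi}--\eqref{estim6}, is exactly what the paper does, and that part is fine. The problem is the strip: you correctly identify the inner-layer contribution as "the main obstacle," but you do not overcome it, and the mechanism you propose cannot. Pairing the exponential concentration of $\partial_\xi Q_0$ against the width $\hat{x}^r-\hat{x}^l=\mathcal{O}(\mu|\ln\mu|)$ does not produce $\mathcal{O}(\mu|\ln\mu|)$: already for $p=1$ one has
\[
\int_{\hat{x}^l}^{\hat{x}^r}\Big|\frac{\partial u}{\partial x}-f\Big|\,dx \;\geq\; \Big|u(\hat{x}^r,t_0)-u(\hat{x}^l,t_0)\Big|-\|f\|_{C}\,(\hat{x}^r-\hat{x}^l),
\]
and the first term is the full jump $\varphi^r-\varphi^l+\mathcal{O}(\mu)$ of the solution across the layer, which Assumption \ref{Cond1} forces to be bounded \emph{below} (it is $\mathcal{O}(1)$ in every example of the paper). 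Exponential decay of $\partial_\xi Q_0$ concentrates this mass near $\xi=0$ but does not reduce its total, so no amount of careful integration of the profile from \eqref{QL0Int}--\eqref{QR0Int} will turn an $\mathcal{O}(1)$ quantity into $\mathcal{O}(\mu|\ln\mu|)$. Your own remark about the $\mu^{-1}$ spike over a $\mu|\ln\mu|$ support is the correct diagnosis; the conclusion you draw from it ("the rate must instead be extracted by...") is not a proof step but a restatement of the difficulty.

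For comparison, the paper does not integrate the layer profile at all. On $\Omega'_m=(\hat{x}^l,\hat{x}^r)$ it simply writes $\|f-f_0\|_{L^p(\Omega'_m)}\leq\|f\|_{L^p(\Omega'_m)}+\|f_0\|_{L^p(\Omega'_m)}$ and bounds both terms by sup-norms times the strip measure, taking $c_2=\|f\|_{C(\Omega)}+\|f_0\|_{C(\Omega)}$ as a $\mu$-independent constant and using $|\Omega'_m|\sim\mu|\ln\mu|$. That route sidesteps the jump argument entirely, at the price of asserting that $\|f_0\|_{C(\Omega)}=\|\partial_x u\|_{C(\Omega)}$ is bounded uniformly in $\mu$ --- which is precisely the point your $\mu^{-1}$-spike observation puts in question, and which also glosses over the $|\Omega'_m|^{1/p}$ versus $|\Omega'_m|$ discrepancy for $p>1$. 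So your proposal and the paper diverge exactly at the step neither of you fully justifies; but as submitted, your argument leaves the inner-layer estimate, and hence \eqref{estimlemma1}, unproved. A minor additional point: the width bound $\hat{x}^r-\hat{x}^l=\mathcal{O}(\mu|\ln\mu|)$ comes from the explicit formulas \eqref{hatxl}--\eqref{hatxr}, not from \eqref{estim2}, which bounds $|x_{t.p.}-x_0|$.
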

	\begin{proof}
		Let us denote $\Omega'_l = (0, \hat{x}^l(t, \mu))$, $\Omega'_r = (\hat{x}^r(t, \mu), 1)$ and $\Omega'_m = (\hat{x}^l(t, \mu), \hat{x}^r(t, \mu))$. According to the equations \eqref{regProbL0} and \eqref{regProbR0}, the function $f$ could be defined as
		\begin{equation*}
			f=
			\begin{cases}
				\displaystyle\dv{\varphi^l(x)}{x}, \quad x\in\Omega'_l,\\[10pt]
				\displaystyle\dv{\varphi^r(x)}{x}, \quad x\in\Omega'_r.
			\end{cases}
		\end{equation*}
	
		From the asymptotic analysis, it follows that:
		\begin{equation}\label{NormFstarF0}
			\norm{f-f_0}_{L^p(\Omega'_l)}=\norm{\dv{\varphi^l(x)}{x} - \pdv{u(x,t)}{x}}_{L^p(\Omega'_l)} \leqslant c_1\mu.
		\end{equation}
		
		Following the same idea, we obtain the inequality:
		\begin{equation}\label{NormFstarF0R}
			\norm{f-f_0}_{L^p(\Omega'_r)} \leqslant c_3\mu.
		\end{equation}
		
		Since $\Delta x \sim \mu| \ln \mu|$, as it was shown in  \cite{ChaZha2022}, we obtain:
		\begin{equation}\label{fqm}
				\norm{f-f_0}_{L^p(\Omega'_m)} \leqslant \norm{f}_{L^p(\Omega'_m)} + \norm{f_0}_{L^p(\Omega'_m)} \leqslant c_2\mu \abs{\ln\mu},
		\end{equation}
		where $c_2=\norm{f}_{C(\Omega)} + \norm{f_0}_{C(\Omega)}$.
		
		From \eqref{NormFstarF0}, \eqref{NormFstarF0R} and \eqref{fqm}, it follows that:
		\begin{equation*}
        \begin{aligned}
				\left\| f-f_0\right\|_{L^p(0,1)}^p &=  \left\| f-f_0\right\|_{L^p(\Omega'_l)}^p+\left\| f-f_0\right\|_{L^p(\Omega'_m)}^p+
				\left\| f-f_0\right\|_{L^p(\Omega'_r)}
                \\
                &\leqslant \mu^p\qty(c_1^p + c_2^p\abs{\ln\mu}^p + c_3^p).
        \end{aligned}
		\end{equation*}
		
		To define \eqref{estimlemma1}, it requires an estimate with $C_1=(c_1^p+c_2^p+c_3^p)^{1/p}$.
    \end{proof}
	
	According to the results from \cite{ChaZha2022}, the following Lemma holds.
	\\
    \begin{lemma} \label{lemma2} 
        Suppose that for $t\in\mathbb{R}_+,u(\cdot,t)\in C^{2,1}(L^2(\Omega),\mathbb{R}_+) $. Let $u^\epsilon(x,t)$ be a minimizer of problem \eqref{uEps} with replacement $t = t_0$. Then for $t \in \mathbb{R}_+$ we have a constant $C_{2}$, which remains unaffected by the parameters $x$, $t$ and $\mu$, such that:
	\begin{equation}\label{uEps2}
    \begin{array}{ll}
		\norm{u^\epsilon(\cdot,t)-u(\cdot,t)}_{H^1(0,1)} &\leqslant 10\sqrt{2}\qty(h\norm{\displaystyle\pdv[2]{u(x,t)}{x}}_{L^2(\Omega)} + \sqrt{\delta}\norm{\displaystyle\pdv[2]{u(x,t)}{x}}^{1/2}_{L^2(\Omega)})
        \\
        &\leqslant C_2(h+\sqrt{\delta}).
        \end{array}
	\end{equation}
    \end{lemma}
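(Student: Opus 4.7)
The plan is to combine the optimality of $u^{\epsilon}$ in the Tikhonov functional \eqref{uEps} with the discrepancy constraint and a discrete-to-continuous interpolation estimate, following the approach of \cite{ChaZha2022}. Since $u^{\epsilon}(\cdot,t_{0})$ is the minimizer and the exact solution $u(\cdot,t_{0})$ is admissible, we have $J(u^{\epsilon})\leqslant J(u)$ where $J$ denotes the objective of \eqref{uEps}. Using the discrepancy equality that fixes $\epsilon(t_{0})$ together with the noise bound $|u(x_{i},t_{0})-u_{i}^{\delta}|\leqslant\delta$ from \eqref{noise1}, both data-fit terms are controlled by $\delta^{2}$, so the optimality inequality reduces to $\norm{\pdv[2]{u^{\epsilon}}{x}}_{L^{2}(\Omega)}\leqslant\norm{\pdv[2]{u}{x}}_{L^{2}(\Omega)}$, and hence $w:=u^{\epsilon}-u$ satisfies $\norm{w_{xx}}_{L^{2}}\leqslant 2\norm{u_{xx}}_{L^{2}}$.

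Second, applying the triangle inequality node-wise in the form $|w(x_{i})|^{2}\leqslant 2|u^{\epsilon}(x_{i})-u_{i}^{\delta}|^{2}+2|u_{i}^{\delta}-u(x_{i})|^{2}$ and summing yields the discrete bound $\frac{1}{k+1}\sum_{i=0}^{k}|w(x_{i})|^{2}\leqslant 4\delta^{2}$. To lift this to a continuous $L^{2}$ estimate, I would compare $w$ with its piecewise-linear interpolant $w_{h}$ on the grid $\Theta_{x}$: the standard interpolation error bound $\norm{w-w_{h}}_{L^{2}}\leqslant c_{1}h^{2}\norm{w_{xx}}_{L^{2}}$ together with the discrete stability $\norm{w_{h}}^{2}_{L^{2}}\leqslant \frac{1}{k+1}\sum_{i}|w(x_{i})|^{2}$ (which relies on the uniform-grid identity $(k+1)h=1$) produces $\norm{w}_{L^{2}}\leqslant 2\delta+c_{1}h^{2}\norm{u_{xx}}_{L^{2}}$.

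Third, to handle the derivative part of the $H^{1}$ norm, I would invoke the one-dimensional Gagliardo--Nirenberg inequality $\norm{w_{x}}_{L^{2}}\leqslant c_{2}\norm{w}^{1/2}_{L^{2}}\norm{w_{xx}}^{1/2}_{L^{2}}$, insert the bounds from the two preceding steps, and split $\sqrt{a+b}\leqslant\sqrt{a}+\sqrt{b}$. Summing the resulting estimates for $\norm{w}_{L^{2}}$ and $\norm{w_{x}}_{L^{2}}$, together with the elementary inequality $\delta\leqslant\sqrt{\delta}\norm{u_{xx}}^{1/2}_{L^{2}}$ valid in the regime $\delta\leqslant\norm{u_{xx}}_{L^{2}}$, yields the claimed $H^{1}$ estimate in \eqref{uEps2}. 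The coarser bound $C_{2}(h+\sqrt{\delta})$ then follows at once from the regularity hypothesis $u(\cdot,t)\in C^{2,1}(L^{2}(\Omega),\mathbb{R}_{+})$, which furnishes a uniform-in-$t$ bound on $\norm{u_{xx}(\cdot,t)}_{L^{2}}$.

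The hardest part will be producing the sharp prefactor $10\sqrt{2}$: each of the triangle, interpolation-error, and Gagliardo--Nirenberg steps carries a numerical constant, so arriving exactly at $10\sqrt{2}$ requires choosing the sharp Gagliardo--Nirenberg constant on the unit interval and tracking factors carefully through every inequality, as is done in detail in \cite{ChaZha2022}, to which the present lemma appeals.
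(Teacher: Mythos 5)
The paper does not actually prove this lemma: it is imported wholesale from \cite{ChaZha2022} with the single sentence ``According to the results from \cite{ChaZha2022}, the following Lemma holds,'' so there is no internal proof to compare your argument against. Judged on its own terms, your sketch is a sensible reconstruction of the standard argument and does produce the correct functional form $h\|u_{xx}\|_{L^2}+\sqrt{\delta}\,\|u_{xx}\|_{L^2}^{1/2}$: the optimality-plus-discrepancy step giving $\|u^{\epsilon}_{xx}\|_{L^2}\le\|u_{xx}\|_{L^2}$ is correct, the nodewise bound $\frac{1}{k+1}\sum_i|w(x_i)|^2\le 4\delta^2$ is correct, and feeding $\|w\|_{L^2}\lesssim \delta+h^2\|u_{xx}\|_{L^2}$ into an interpolation inequality for $\|w_x\|_{L^2}$ does yield the stated rates in $h$ and $\sqrt{\delta}$.

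There are, however, two concrete gaps. First, the multiplicative Gagliardo--Nirenberg inequality $\|w_x\|_{L^2}\le c\,\|w\|_{L^2}^{1/2}\|w_{xx}\|_{L^2}^{1/2}$ is false on a bounded interval without boundary conditions: for $w(x)=x$ on $(0,1)$ the right-hand side vanishes while the left does not. You need the additive form $\|w_x\|_{L^2}\le c\bigl(\|w\|_{L^2}^{1/2}\|w_{xx}\|_{L^2}^{1/2}+\|w\|_{L^2}\bigr)$; the extra term is still of the admissible order, so the argument survives, but as written the key step fails. Second, the absorption $\delta\le\sqrt{\delta}\,\|u_{xx}\|_{L^2}^{1/2}$ silently assumes the regime $\delta\le\|u_{xx}\|_{L^2}$, which is neither among the lemma's hypotheses nor verified, and you concede that the explicit prefactor $10\sqrt{2}$ is not derived --- so the proposal, even repaired, establishes only the second inequality in \eqref{uEps2} (the $C_2(h+\sqrt{\delta})$ bound), not the first with its specific constant. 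A minor further slip: the discrete stability $\|w_h\|_{L^2}^2\le\frac{1}{k+1}\sum_i|w(x_i)|^2$ for the piecewise-linear interpolant holds only up to a harmless multiplicative factor (each cell contributes $\frac{h}{3}(a^2+ab+b^2)$), which again affects only constants.
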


	Given that the norms on the right-hand side of \eqref{uEps2} are particularly large in the internal layer, we proceed with an asymptotic analysis to eliminate this internal layer from our consideration. We will then separately employ the optimization problem \eqref{uEps} and minimize both regions,  $\Omega'_l$ and $\Omega'_r$, respectively. This approach allows us to minimize the error in each region independently, which is useful since the behavior of the solution differs significantly between the two regions.

	The Lemmas \ref{lemma1} and \ref{lemma2} lead us to the following Theorem:\\
	\begin{theorem} \label{theorem3} The function $f^\delta$, defined in \eqref{minProbFdelta}, is a stable approximation to the exact coefficient function $f$. Moreover, the following estimate for the rate of convergence of the inverse problem is valid:
	\begin{equation*}
		\norm{f-f^\delta}_{L^2(\Omega)} = \mathcal{O}(\mu\abs{\ln\mu} + h + \sqrt{\delta}).
	\end{equation*} 
	
	Moreover, if $\mu=\mathcal{O}(\delta^{\varepsilon+1/2})$ ($\varepsilon$ is any positive constant) and $h=\mathcal{O}(\sqrt{\delta})$, then the following equation holds:
	\begin{equation*}
		\norm{f-f^\delta}_{L^2(\Omega)} = \mathcal{O}(\sqrt{\delta}).
	\end{equation*}
    \end{theorem}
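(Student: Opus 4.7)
The plan is to invoke the triangle inequality
\begin{equation*}
\|f - f^\delta\|_{L^2(\Omega)} \leq \|f - f_0\|_{L^2(\Omega)} + \|f_0 - f^\delta\|_{L^2(\Omega)}
\end{equation*}
and dispatch the two terms separately. The first term is delivered immediately by Lemma \ref{lemma1} with $p=2$, giving $\|f - f_0\|_{L^2(\Omega)} \leq C_1\mu|\ln\mu|$. Everything therefore reduces to estimating the data-fidelity term $\|f_0 - f^\delta\|_{L^2(\Omega)}$.

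For this second term I would exploit the minimizer property of $f^\delta$ in \eqref{minProbFdelta}: testing against the trial function $f_0$ gives
\begin{equation*}
\frac{1}{k+1}\sum_{i=0}^{k}(f^\delta(x_i) - \omega_i^\delta)^2 \leq \frac{1}{k+1}\sum_{i=0}^{k}(f_0(x_i) - \omega_i^\delta)^2,
\end{equation*}
and then use a triangle inequality at the nodes to bound $\frac{1}{k+1}\sum_i(f^\delta(x_i) - f_0(x_i))^2$ by twice the right-hand side. In case (a), when $\omega_i^\delta$ are direct gradient samples, the noise assumption \eqref{noise1} controls the right-hand side by $\delta^2$, since $f_0(x_i) = \partial u(x_i,t_0)/\partial x$. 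In case (b), when $\omega_i^\delta$ is built from the smoothed reconstruction $u^\epsilon$ defined in \eqref{uEps}, Lemma \ref{lemma2} yields $|\omega_i^\delta - f_0(x_i)| \leq C_2(h + \sqrt{\delta})$ via the $H^1$-bound on $u^\epsilon - u$ evaluated at the nodes. In either case one arrives at a discrete nodal bound of the form $\frac{1}{k+1}\sum_i (f^\delta(x_i) - f_0(x_i))^2 = \mathcal{O}((h+\sqrt{\delta})^2)$.

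The next step is to transfer this discrete bound to a continuous $L^2$-bound. Because $f_0$ is smooth outside the transition layer and $f^\delta \in C^1(0,1)$, a standard quadrature-error estimate (midpoint or trapezoidal) contributes at most an additional $\mathcal{O}(h)$ correction on $\Omega_l' \cup \Omega_r'$; on $\Omega_m'$ the Lebesgue measure is $\mathcal{O}(\mu|\ln\mu|)$, which is absorbed into the term already produced by Lemma \ref{lemma1}. Summing yields $\|f-f^\delta\|_{L^2(\Omega)} = \mathcal{O}(\mu|\ln\mu| + h + \sqrt{\delta})$, which is the first assertion. For the second assertion, I would simply substitute $\mu = \mathcal{O}(\delta^{\varepsilon+1/2})$ and $h = \mathcal{O}(\sqrt{\delta})$: since $\mu|\ln\mu| = \mathcal{O}(\delta^{\varepsilon+1/2}|\ln\delta|) = o(\sqrt{\delta})$ for any $\varepsilon > 0$, the $\sqrt{\delta}$ rate dominates.

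The main obstacle is the passage from the nodal estimate to the continuous $L^2$-norm, because the minimizer $f^\delta$ is only constrained to lie in $C^1(0,1)$ and so is not a priori bounded in any stronger norm that would justify a clean quadrature estimate. I would overcome this by noting that $f^\delta$ inherits a uniform $C^1$-bound from the noisy data together with the smoothness of $f_0$ away from the layer, so the standard error analysis for piecewise-smooth quadrature applies on $\Omega_l' \cup \Omega_r'$, while the layer region $\Omega_m'$ is handled purely by its vanishing measure — exactly as in the splitting used in the proof of Lemma \ref{lemma1}.
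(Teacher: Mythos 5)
Your proposal is correct in outline and follows essentially the same skeleton as the paper's proof: the triangle inequality $\|f-f^\delta\|_{L^2} \le \|f-f_0\|_{L^2} + \|f_0-f^\delta\|_{L^2}$, Lemma \ref{lemma1} for the first term, Lemma \ref{lemma2} plus the minimizer property of \eqref{minProbFdelta} for the second, and the same substitution argument for the $\mathcal{O}(\sqrt{\delta})$ rate. Where you diverge is in the treatment of the data-fidelity term: the paper inserts $\partial u^\epsilon/\partial x$ as an intermediate quantity and asserts directly that $\|\partial u^\epsilon/\partial x - f^\delta\|_{L^2} \le \|\partial u^\epsilon/\partial x - \partial u/\partial x\|_{L^2}$ ``by the minimization property,'' whereas you test the minimizer against the trial function $f_0$ at the nodes, obtain a discrete bound $\frac{1}{k+1}\sum_i (f^\delta(x_i)-f_0(x_i))^2 = \mathcal{O}((h+\sqrt{\delta})^2)$, and then explicitly confront the passage from this nodal estimate to a continuous $L^2$ bound. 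That passage is the genuine weak point of both arguments, and you are right to flag it: as stated, \eqref{minProbFdelta} minimizes a purely discrete functional over all of $C^1(0,1)$ with no regularization term, so its minimum is zero (any $C^1$ interpolant of the data attains it) and the minimizer carries no a priori derivative bound; your claim that $f^\delta$ ``inherits a uniform $C^1$-bound from the noisy data'' does not follow without an additional smoothness penalty or a finite-dimensional ansatz. The paper silently elides the same issue, so your version is, if anything, the more honest account — it isolates exactly the hypothesis (a quantitative bound on $f^\delta$ between nodes) that must be supplied for the discrete-to-continuous step, while reaching the same final estimates.
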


\begin{proof}
   Let's start by using the triangle inequality:

\begin{equation*}
    \|f - f^\delta\|_{L^2(\Omega)} \leq \|f - f_0\|_{L^2(\Omega)} + \|f_0 - f^\delta\|_{L^2(\Omega)}.
\end{equation*}

For the first term, we can directly apply Lemma \ref{lemma1}:

\begin{equation*}
    \|f - f_0\|_{L^2(\Omega)} \leq C_1 \mu |\ln(\mu)|.
\end{equation*}

For the second term, we proceed as follows:
\begin{align*}
\begin{split}
    \|f_0 - f^\delta\|_{L^2(\Omega)} &= \left\|\frac{\partial u(\cdot,t_0)}{\partial x} - f^\delta\right\|_{L^2(\Omega)} \\
    &\leq \left\|\frac{\partial u(\cdot,t_0)}{\partial x} - \frac{\partial u^\epsilon(\cdot,t_0)}{\partial x}\right\|_{L^2(\Omega)} + \left\|\frac{\partial u^\epsilon(\cdot,t_0)}{\partial x} - f^\delta\right\|_{L^2(\Omega)}.
\end{split}
\end{align*}

For the first part of this inequality, we can apply Lemma \ref{lemma2}, the $H^1$ norm includes the $L^2$ norm of the first derivative, so:
\begin{equation*}
    \left\|\frac{\partial u(\cdot,t_0)}{\partial x} - \frac{\partial u^\epsilon(\cdot,t_0)}{\partial x}\right\|_{L^2(\Omega)} \leq C_2(h + \sqrt{\delta}).
\end{equation*}

For the term $\|\frac{\partial u^\varepsilon}{\partial x} - f^\delta\|_{L^2(\Omega)}$, we use the properties of the minimization problem defining $f^\delta$. Since $f^\delta$ minimizes the sum of squares \eqref{minProbFdelta}, we have:
\begin{equation*}
    \left\|\frac{\partial u^\varepsilon}{\partial x} - f^\delta\right\|_{L^2(\Omega)} \leq \left\|\frac{\partial u^\varepsilon}{\partial x} - \frac{\partial u}{\partial x}\right\|_{L^2(\Omega)} \leq c_4(h + \sqrt{\delta}),
\end{equation*}
where $c_4$ is a constant.

Combining these results:
\begin{equation*}
    \|f - f^\delta\|_{L^2(\Omega)} \leq C_1 \mu |\ln(\mu)| + C_2(h + \sqrt{\delta}) + c_4(h + \sqrt{\delta}).
\end{equation*}

Therefore, we can conclude:
\begin{equation*}
    \|f - f^\delta\|_{L^2(\Omega)} = \mathcal{O}(\mu|\ln(\mu)| + h + \sqrt{\delta}).
\end{equation*}

Now, if we assume $\mu = \mathcal{O}(\delta^{\varepsilon+1/2})$ for some $\varepsilon > 0$ and $h = \mathcal{O}(\sqrt{\delta})$, we can further simplify:

\begin{align*}
\begin{split}
    \|f - f^\delta\|_{L^2(\Omega)} &= \mathcal{O}(\delta^{\varepsilon+1/2}|\ln(\delta^{\varepsilon+1/2})| + \sqrt{\delta} + \sqrt{\delta})  \\
    &= \mathcal{O}(\delta^{\varepsilon+1/2}|\ln(\delta)| + \sqrt{\delta})  \\
    &= \mathcal{O}(\sqrt{\delta}).
\end{split}
\end{align*}

The last step follows because for sufficiently small $\delta$, the term $\delta^{\varepsilon+1/2}|\ln(\delta)|$ is asymptotically smaller than $\sqrt{\delta}$.
\end{proof}

\subsection{An efficient regularization method for (\textbf{IP2})}    
    
To solve problem \textbf{IP2}, we will use the equation \eqref{x0}, which for $u^{0}(t), u^{1}(t), f(t), x_0(t)\in \mathbb{R}$ is connected by the following equation:
	\begin{equation}\label{Eqf(t)}
		f(t) = \frac{u^{0}(t)+u^{1}(t)}{1-2x_0(t)}.
	\end{equation}
	
	Assume we have a deterministic model \eqref{noise2}	at the grid nodes $\Theta_t$ with the maximum grid size $h_1=\mathop{\max}_{i\in0,\ldots,k-1}\{(t_{i+1}-t_i)\}$. Then based on \eqref{Eqf(t)} we consider the following simplified reconstruction formula:
	\begin{equation}\label{restoreeqft}
		f_i^{\delta} = \frac{u_{i}^{0,\delta}+u_i^{1,\delta}}{1-2x_{t.p.,i}^\delta}.
	\end{equation}
	%
    
Moreover, the noise level of the $f_i$ can be estimated according to the following lemma:

\begin{lemma}
\label{fiNoiseEst}
There exists a constant $C_a$, independent of the parameters $\mu$ and $t$, such that for $\delta \leq a/2$ and $\mu \leq a/(2C_x)$,
    \begin{equation*}
     \abs{f_{i}^{\delta} - f_{i}}  \leq C_a (\delta+\mu | \ln \mu|).
    \end{equation*}
\end{lemma}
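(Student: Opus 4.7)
The starting point is the identity \eqref{Eqf(t)} at the node $t_i$, $f_i=(u^0(t_i)+u^1(t_i))/(1-2x_0(t_i))$, together with the reconstruction formula \eqref{restoreeqft}. Writing $A=u^0(t_i)+u^1(t_i)$, $A^\delta=u_i^{0,\delta}+u_i^{1,\delta}$, $B=1-2x_0(t_i)$, and $B^\delta=1-2x_{t.p.,i}^\delta$, I would split the error via the elementary identity
\begin{equation*}
f_i^\delta-f_i \;=\; \frac{A^\delta}{B^\delta}-\frac{A}{B} \;=\; \frac{A^\delta-A}{B^\delta}+A\cdot\frac{B-B^\delta}{B\,B^\delta},
\end{equation*}
so that the problem reduces to bounding the three factors $|A^\delta-A|$, $|B-B^\delta|$, and suitable lower bounds on $|B|$ and $|B^\delta|$.

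The numerator bounds follow at once. From \eqref{noise2}, $|A^\delta-A|\le 2\delta$; boundedness of $u^0,u^1$ gives $|A|\le M$ for some constant $M$. For $B-B^\delta$ I would insert $\pm 2x_{t.p.}(t_i)$ and use the triangle inequality to write
\begin{equation*}
|B-B^\delta| \;\le\; 2|x_{t.p.,i}^\delta-x_{t.p.}(t_i)|+2|x_{t.p.}(t_i)-x_0(t_i)| \;\le\; 2\delta+2C_x\mu|\ln\mu|,
\end{equation*}
invoking the data noise bound \eqref{noise2} for the first term and the sharp transition-layer estimate \eqref{estim2} from Theorem \ref{asymptEstimates} for the second.

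The main work is the lower bound on the denominators, which is exactly where Assumption \ref{Cond4} and the smallness conditions enter. Assumption \ref{Cond4} gives $|1-2x_{t.p.}(t_i)|\ge 2a$ for every $t_i$. Combining this with the noise bound $|x_{t.p.,i}^\delta-x_{t.p.}(t_i)|\le\delta\le a/2$ yields $|B^\delta|\ge 2a-2\delta\ge a$. For $B$, I would use \eqref{estim2} and the hypothesis on $\mu$ (interpreted so that $C_x\mu|\ln\mu|\le a/2$, which is what the condition $\mu\le a/(2C_x)$ enforces once $\mu$ is taken small enough to absorb the logarithm) to obtain $|B|\ge 2a-2C_x\mu|\ln\mu|\ge a$. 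Plugging everything into the split identity then gives
\begin{equation*}
|f_i^\delta-f_i| \;\le\; \frac{2\delta}{a}+\frac{M(2\delta+2C_x\mu|\ln\mu|)}{a^2} \;\le\; C_a(\delta+\mu|\ln\mu|),
\end{equation*}
with $C_a$ depending only on $a$, $M$, and $C_x$.

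The only subtle point is the interplay between the smallness conditions on $\delta$ and $\mu$ and the uniform lower bound on the denominators: one must verify that the perturbations of $x_0$ away from $x_{t.p.}$ (controlled by \eqref{estim2}) and of $x_{t.p.,i}^\delta$ away from $x_{t.p.}(t_i)$ (controlled by \eqref{noise2}) cannot push either $B$ or $B^\delta$ close to zero. The hypotheses $\delta\le a/2$ and $\mu\le a/(2C_x)$ are tailored precisely to guarantee this; everything else in the argument is straightforward algebra.
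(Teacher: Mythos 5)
Your proposal is correct and follows essentially the same route as the paper: both arguments rest on the noise bound \eqref{noise2}, the transition-layer estimate \eqref{estim2}, and the lower bounds on the denominators obtained from Assumption \ref{Cond4} together with the smallness conditions on $\delta$ and $\mu$; the only difference is cosmetic bookkeeping (you perturb numerator and denominator in one algebraic identity, while the paper inserts the intermediate quantity $(u^0_i+u^1_i)/(1-2x_{t.p.,i})$ and applies the triangle inequality). Your remark that the hypothesis $\mu\le a/(2C_x)$ really needs to be read as controlling $C_x\mu|\ln\mu|\le a/2$ is apt --- the paper's own proof silently switches to exactly that condition mid-argument.
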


\begin{proof} 
Note that $ \abs{ x_{t.p.} - x_0} \leq C_x \mu$ implies that $x_0  \leq x_{t.p.} + C_x \mu$ and $x_0  \geq x_{t.p.} - C_x \mu$. From Assumption \ref{Cond4}, we can conclude that 
\begin{equation*}
x_0(t) \leq x_{t . p .}(t)+C_x \mu| \ln \mu|<(0.5-a)+C_x \mu| \ln \mu|
\end{equation*}
or 
\begin{equation*}
x_0(t) \geq x_{t . p .}(t)-C_x \mu| \ln \mu| > (0.5+a)-C_x \mu| \ln \mu|,
\end{equation*}
which implies together with the inequality $\mu| \ln \mu| \leq \frac{a}{2 C_x}$ that 
\begin{equation*}
x_0(t)<0.5-a+C_x\left(\frac{a}{2 C_x}\right)=0.5-a+\frac{a}{2}=0.5-\frac{a}{2}
\end{equation*}
or 
\begin{equation*}
x_0(t)>0.5+a-C_x\left(\frac{a}{2 C_x}\right)=0.5+a-\frac{a}{2}=0.5+\frac{a}{2}.
\end{equation*}

As a result, we derive 
\begin{equation}
\label{x0Estimate}
1-2 x_0(t)>a \text{~or~} 1-2 x_0(t) < -a.
\end{equation}

On the other hand, from Assumption \ref{Cond4} we have: 
\begin{equation}
\label{xtpEstimate}
1-2 x_{t.p.}(t)>2 a \text{~or~} 1-2 x_{t.p.}(t)<-2 a.
\end{equation}

Therefore, when $\delta \leqslant a/2$, we have:
\begin{equation}
\label{Ineqxtp1}
1-2 x_{t.p., i}^\delta>1-2\left(x_{t.p., i}+\delta\right)>1-2 x_{t.p., i}-2 \delta>2 a-2 \delta \geqslant a>0 
\end{equation}
or 
\begin{equation}
\label{Ineqxtp2}
1-2 x_{t.p., i}^\delta<1-2\left(x_{t.p., i} - \delta\right)=1+2 \delta-2 x_{t.p., i}<-2 a+2 \delta \leqslant-a. 
\end{equation}

By combining \eqref{x0Estimate}-\eqref{Ineqxtp2}, we derive that 
\begin{equation*}
\left|\left(1-2 x^\delta_{t.p., i}\right)\left(1-2 x_{t.p.}\right)\right| > 2a^2 \text{~and~} \left|\left(1-2 x^\delta_{t.p., i}\right)\left(1-2 x_{0}\right)\right|> a^2,
\end{equation*}
which implies the following two groups of inequalities 
    \begin{equation*}
    \begin{aligned}
     &\abs{ \frac{u_{i}^{0,\delta} + u_{i}^{1,\delta}}{1 - 2x_{t.p.,i}^{\delta}} - \frac{u_{i}^{0} + u_{i}^{1}}{1 - 2x_{t.p.,i}}}  \\
     &\quad\quad =\abs{\frac{(u_{i}^{0,\delta} + u_{i}^{1,\delta})(1 - 2x_{t.p.,i}) - (u_{i}^{0} + u_{i}^{1})(1 - 2x_{t.p.,i}^{\delta})}{(1 - 2x_{t.p.,i}^{\delta})(1 - 2x_{t.p.,i})}}  \\
     &\quad\quad \leq \frac{\abs{2(x_{t.p.,i}^{\delta} - x_{t.p.,i})(u_{i}^{0} + u_{i}^{1}) + (1 -2x_{t.p.,i})(u_{i}^{0,\delta} + u_{i}^{1,\delta} - u_{i}^{0} - u_{i}^{1})}}{2a^2}  \\
     &\quad\quad \leq \frac{2(u_{i}^{0} + u_{i}^{1})\cdot \delta + 2\delta}{2a^2} = \frac{u_{i}^{0} + u_{i}^{1} + 1}{a^2}\cdot \delta,
     \end{aligned}
    \end{equation*}

and 
    \begin{equation*}
    \begin{aligned}
     \abs{ \frac{u_{i}^{0} + u_{i}^{1}}{1 - 2x_{t.p.,i}} - \frac{u_{i}^{0} + u_{i}^{1}}{1 - 2x_{0,i}}} &=  \abs{ u_{i}^{0} + u_{i}^{1}} \abs{\frac{2(x_{t.p.,i}-x_{0,i})}{(1 - 2x_{t.p.,i})(1 - 2x_{0,i})}} \\
     &\leq  \abs{ u_{i}^{0} + u_{i}^{1}} \frac{C_x}{a^2} \mu| \ln \mu|.
     \end{aligned}
    \end{equation*}

Thus, it holds that  
    \begin{equation*}
    \begin{aligned}
     \abs{f_{i}^{\delta} - f_{i}} &\leq \abs{ \frac{u_{i}^{0,\delta} + u_{i}^{1,\delta}}{1 - 2x_{t.p.,i}^{\delta}} - \frac{u_{i}^{0} + u_{i}^{1}}{1 - 2x_{t.p.,i}}} + \abs{ \frac{u_{i}^{0} + u_{i}^{1}}{1 - 2x_{t.p.,i}} - \frac{u_{i}^{0} + u_{i}^{1}}{1 - 2x_{0,i}}} \\ 
     &\leq \frac{u_{i}^{0} + u_{i}^{1} + 1}{a^2}\cdot \delta + \abs{ u_{i}^{0} + u_{i}^{1}} \frac{C_x}{a^2} \mu| \ln \mu|,
     \end{aligned}
    \end{equation*}
which yields the required inequality with $C_a = \max( \frac{u_{i}^{0} + u_{i}^{1} + 1}{a^2}, \abs{ u_{i}^{0} + u_{i}^{1}} \frac{C_x}{a^2})$.
\end{proof}

	We then smooth the points $f_i^{\delta}$ by solving the minimization problem:
	\begin{equation}\label{fEpsilon}
		f^{\epsilon_1}(t) = \arg\min_{C^1(\mathscr{T})} \frac{1}{k+1} \sum_{i=0}^k \qty(v(t_i)-f_i^{\delta})^2 + \epsilon_1\norm{\displaystyle\pdv[2]{v}{t}}_{L^2(\mathscr{T})},
	\end{equation}
	where  $ \mathscr{T}=[0,T]$ and the regularisation parameter $\epsilon_1$ is chosen is chosen such that the minimum element \(f^{\epsilon_1}(t_i)\) in \eqref{fEpsilon} satisfies the equality:
	\begin{equation*}
		\frac{1}{k+1}\sum_{i=0}^k(f^{\epsilon_1}(t_i)-f_i^{\delta})^2=\delta^2.
	\end{equation*}

	\begin{lemma} 
    \label{lemma3} 
    Take $f$ as the exact function satisfying the original equation \eqref{InitProb} and let $f_0=\displaystyle\frac{u^0+u^1}{1-2x_{t.p.}}$. Then, subject to Assumption \ref{Cond2}, there exists a constant $C_3$, which remains unaffected by the variables $\mu$ and $t$, such that:
	\begin{equation}\label{FstarF0Est}
		\left\| f-f_0 \right\|_{L^p(\mathscr{T})}\leqslant C_3\mu| \ln \mu|, \quad {\forall} p\in (0,+\infty).
	\end{equation}
	\end{lemma}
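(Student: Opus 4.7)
The plan is to derive a pointwise bound $|f(t)-f_0(t)| \leq C_3\mu|\ln\mu|$ and then integrate over the compact interval $\mathscr{T}=[0,T]$ to conclude the $L^p$ estimate for every $p\in(0,+\infty)$. The key observation is that when $F(x,t)\equiv f(t)$, formula \eqref{x0} can be solved for $f(t)$ explicitly, giving
$$
f(t)=\frac{u^0(t)+u^1(t)}{1-2x_0(t)},
$$
so $f$ and $f_0$ have the same numerator and differ only through $x_0$ versus $x_{t.p.}$ in the denominator.

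First I would write the difference as
$$
f(t)-f_0(t)=\bigl(u^0(t)+u^1(t)\bigr)\cdot\frac{2\bigl(x_0(t)-x_{t.p.}(t)\bigr)}{\bigl(1-2x_0(t)\bigr)\bigl(1-2x_{t.p.}(t)\bigr)}.
$$
Next I would apply the sharp transition-layer estimate \eqref{estim2} from Theorem \ref{asymptEstimates}, namely $|x_{t.p.}(t)-x_0(t)|\leq C_x\mu|\ln\mu|$, to control the numerator. The periodicity and smoothness of $u^0,u^1$ guarantee that $|u^0(t)+u^1(t)|$ is uniformly bounded on $\mathbb{R}_+$ by some constant $M$.

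The main obstacle — and the reason Assumption \ref{Cond4} is implicitly required here (beyond the stated Assumption \ref{Cond2}) — is bounding the two denominators away from zero uniformly in $t$. For $x_{t.p.}(t)$, Assumption \ref{Cond4} directly yields $|1-2x_{t.p.}(t)|\geq 2a$. For $x_0(t)$, I would combine Assumption \ref{Cond4} with the transition-layer estimate, exactly as in the proof of Lemma \ref{fiNoiseEst}: choosing $\mu$ small enough that $C_x\mu|\ln\mu|\leq a/2$, the triangle inequality forces $|1-2x_0(t)|\geq a$. Thus
$$
|f(t)-f_0(t)|\leq \frac{M\cdot 2 C_x\mu|\ln\mu|}{2a\cdot a}=\frac{MC_x}{a^2}\mu|\ln\mu|.
$$

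Finally, integrating this pointwise bound over $\mathscr{T}=[0,T]$ and taking the $p$-th root produces
$$
\|f-f_0\|_{L^p(\mathscr{T})}\leq T^{1/p}\cdot\frac{MC_x}{a^2}\,\mu|\ln\mu|,
$$
which is the desired estimate \eqref{FstarF0Est} with $C_3=T^{1/p}MC_x/a^2$ (and the $p$-dependence is harmless since $T^{1/p}$ stays bounded on any fixed range of $p$, or we can simply absorb the worst case into $C_3$). The only real subtlety is the denominator control; everything else is an elementary algebraic manipulation followed by the application of a previously established estimate.
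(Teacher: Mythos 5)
Your proposal is correct and follows essentially the same route as the paper: rewrite $f-f_0$ as $(u^0+u^1)\cdot 2(x_0-x_{t.p.})/\bigl((1-2x_0)(1-2x_{t.p.})\bigr)$, control the numerator by the transition-layer estimate \eqref{estim2}, bound the denominators away from zero via Assumption \ref{Cond4} (which the paper's proof also invokes despite the lemma statement citing only Assumption \ref{Cond2} — your observation on this point is accurate), and integrate over $\mathscr{T}$. If anything, your pointwise-bound-then-integrate step is cleaner than the paper's, which bounds the $L^p$ norm of the product by a product of $L^p$ norms — an inequality that is not valid in general and is implicitly repaired only because two of the factors are uniformly bounded, exactly as your argument makes explicit.
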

	\begin{proof}
    From equation \eqref{x0}, together with Assumption \ref{Cond4} and the estimate \eqref{estim2}, we have:
	\begin{equation*}
		f = \frac{u^0+u^1}{1-2x_0}, \quad f_0 = \frac{u^0+u^1}{1-2x_{t.p.}}, \quad \left|{x_0 -x_{t.p.}}\right|_{L(\mathscr{T})} < C_x\mu| \ln \mu|.
	\end{equation*}

Using the bound within the integral
\begin{equation*}
\begin{aligned}
\left\|x_0-x_{\mathrm{t} . \mathrm{p} .}\right\|_{L^p(\mathscr{T} )}&=\left(\int_0^T\left|x_0(t)-x_{\mathrm{t.p.}}(t)\right|^p d t\right)^{1 / p} \\
&<\left(\int_0^T\left({C_x \mu| \ln \mu|}\right)^p d t\right)^{1 / p} = T^{1 / p} {C_x \mu| \ln \mu|},
\end{aligned}
\end{equation*}
we now estimate the \( L^p \)-norm of the difference between \( f \) and \( f_0 \):
        \begin{equation*}
        \begin{aligned}
				\norm{f-f_0}_{L^p(\mathscr{T})} &=  \norm{\frac{u^0+u^1}{1-2x_0}-\frac{u^0+u^1}{1-2x_{t.p.}}}_{L^p(\mathscr{T})} \\
				&\leqslant  \norm{x_0-x_{t.p.}}_{L^p(\mathscr{T})} \norm{\frac{2}{(1-2x_0)(1-2x_{t.p.})}} _{L^p(\mathscr{T})} \norm{u^0+u^1}_{L^p(\mathscr{T})} \\
                &< \norm{u^0+u^1}_{L^p(\mathscr{T})}\cdot \dfrac{C_{x}}{a^{2}}T^{2/p}\mu| \ln \mu|.
	     \end{aligned}
        \end{equation*}
	
	To obtain \eqref{FstarF0Est}, we use the estimate with $C_3=\norm{u^0+u^1}_{L^p(\mathscr{T})}\cdot \dfrac{C_{x}}{a^{2}}T^{2/p}$.
	\end{proof}
	
	Following the approach used in Lemma \ref{lemma2}, we write:
	
\begin{lemma} 
\label{lemma4} 
    Assume that $f(\cdot)\in C^2(L^2(\mathscr{T})) $ and let $f^{\epsilon_1}(\cdot)$ be the minimizer of problem \eqref{fEpsilon}. Then, there exists a constant $C_{4}$, independent of the parameters $\mu$ and $t$, such that:
	\begin{equation*}\label{uEps3}
        \begin{array}{ll}
		\norm{f^{\epsilon_1}(\cdot)-f(\cdot)}_{H^1(\mathscr{T})} &\leqslant 10\sqrt{2}\qty(h_1\norm{\displaystyle\pdv[2]{f(t)}{t}}_{L^2(\mathscr{T})} + \sqrt{\delta}\norm{\displaystyle \frac{\partial^2 f(t)}{\partial t^2}}_{L^2(\mathscr{T})}^{1/2})
         \\
        &\leqslant C_4(h_1+\sqrt{\delta}).
           \end{array}
	\end{equation*}
	\end{lemma}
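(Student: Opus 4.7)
The plan is to mirror the argument of Lemma \ref{lemma2}, substituting the temporal variable $t$ and grid $\Theta_t$ for the spatial variable $x$ and grid $\Theta_x$; since the underlying inequality is one-dimensional and depends only on the interval length, the grid spacing, and the $H^2$-regularity of the target function, the machinery developed in \cite{ChaZha2022} transfers verbatim after the bookkeeping $h \leftrightarrow h_1$, $\Omega \leftrightarrow \mathscr{T}$, $u \leftrightarrow f$. First I would write the Tikhonov functional
\begin{equation*}
J(v) = \frac{1}{k+1}\sum_{i=0}^k \qty(v(t_i) - f_i^\delta)^2 + \epsilon_1\norm{\pdv[2]{v}{t}}_{L^2(\mathscr{T})}^2,
\end{equation*}
and apply the optimality $J(f^{\epsilon_1}) \leq J(f)$. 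Combined with the discrepancy principle  $\frac{1}{k+1}\sum_i (f^{\epsilon_1}(t_i)-f_i^\delta)^2 = \delta^2$ imposed on $\epsilon_1$, and the deterministic noise bound  $\frac{1}{k+1}\sum_i (f(t_i)-f_i^\delta)^2 \leq \delta^2$ coming from the data model for $f_i^\delta$, this yields simultaneously the a priori estimate $\norm{(f^{\epsilon_1})''}_{L^2(\mathscr{T})} \leq \norm{f''}_{L^2(\mathscr{T})}$ and the discrete-node closeness
\begin{equation*}
\frac{1}{k+1}\sum_{i=0}^k \qty(f^{\epsilon_1}(t_i) - f(t_i))^2 \leq 4\delta^2.
\end{equation*}

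Next I would promote the discrete node bound to a continuous $H^1(\mathscr{T})$ estimate. This rests on the quantitative discrete-to-continuous Sobolev-type inequality used in \cite{ChaZha2022}: for any $v \in H^2(\mathscr{T})$ on a partition of maximum width $h_1$,
\begin{equation*}
\norm{v}_{H^1(\mathscr{T})} \leq C\qty(\sqrt{\tfrac{1}{k+1}\textstyle\sum_i v(t_i)^2}\,\norm{v''}_{L^2(\mathscr{T})}^{1/2} + h_1 \norm{v''}_{L^2(\mathscr{T})}),
\end{equation*}
which is obtained by piecewise-polynomial interpolation on the subintervals of length $\leq h_1$ coupled with the Gagliardo--Nirenberg-type interpolation $\norm{v'}_{L^2} \leq C\norm{v}_{L^2}^{1/2}\norm{v''}_{L^2}^{1/2}$. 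Applying this to $v = f^{\epsilon_1} - f$ (for which $\norm{v''}_{L^2(\mathscr{T})} \leq 2\norm{f''}_{L^2(\mathscr{T})}$) and substituting the discrete-node bound above produces
\begin{equation*}
\norm{f^{\epsilon_1}-f}_{H^1(\mathscr{T})} \leq 10\sqrt{2}\qty(h_1\norm{f''}_{L^2(\mathscr{T})} + \sqrt{\delta}\,\norm{f''}_{L^2(\mathscr{T})}^{1/2}),
\end{equation*}
the specific constant $10\sqrt{2}$ being inherited from the sharp form of the Sobolev inequality in \cite{ChaZha2022}. Bounding $\norm{f''}_{L^2(\mathscr{T})}$ by a $\mu$-independent constant, using the hypothesis $f \in C^2$ on the bounded interval $\mathscr{T}=[0,T]$, then yields the required bound $C_4(h_1 + \sqrt{\delta})$.

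The main obstacle is the third step: extracting the correct exponent $1/2$ on $\delta$ in the mixing term rather than a crude $\delta$ requires an honest interpolation between the discrete $\ell^2$-level $\delta$ and the a priori $H^2$-bound on $f^{\epsilon_1}-f$, not merely a Poincaré estimate. The remaining work is algebraic bookkeeping, and the periodicity of $f$ on $\mathscr{T}$ eliminates any boundary pathology in the interpolation inequality.
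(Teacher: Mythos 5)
Your overall route is the one the paper intends: the paper gives no argument for this lemma at all (it simply says ``following the approach used in Lemma \ref{lemma2}'' and defers to \cite{ChaZha2022}), and the Tikhonov-functional comparison $J(f^{\epsilon_1})\le J(f)$, the discrepancy-principle choice of $\epsilon_1$, the a priori bound on the second derivative, and the discrete-to-continuous interpolation inequality with the $\|v\|_{L^2}^{1/2}\|v''\|_{L^2}^{1/2}$ term are exactly the ingredients behind the quoted constant $10\sqrt{2}$ in Lemma \ref{lemma2}. So as a reconstruction of the intended mechanism, your proposal is faithful.

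There is, however, one genuine gap in the step you label as the ``deterministic noise bound.'' Unlike Lemma \ref{lemma2}, where the nodal data are $\delta$-accurate measurements of $u(x_i,t_0)$ by \eqref{noise1}, here the nodes $f_i^{\delta}$ are \emph{derived} quantities computed through \eqref{restoreeqft} from noisy boundary values and a noisy transition-point location. By Lemma \ref{fiNoiseEst} the resulting nodal error is $|f_i^{\delta}-f(t_i)|\le C_a(\delta+\mu|\ln\mu|)$, not $\le\delta$: there is both a constant $C_a$ (involving $a^{-2}$ and the boundary data) and a modeling term $\mu|\ln\mu|$ coming from $|x_{t.p.}-x_0|$. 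Consequently your inequality $\frac{1}{k+1}\sum_i(f(t_i)-f_i^{\delta})^2\le\delta^2$ is not justified, the discrepancy level $\delta^2$ in \eqref{fEpsilon} may not even be attainable by the exact $f$, and the final bound cannot be $C_4(h_1+\sqrt{\delta})$ with no $\mu$-dependence unless this is repaired. The clean fix, consistent with how the subsequent theorem actually assembles the estimate, is to run your argument with the target $f_0=(u^0+u^1)/(1-2x_{t.p.})$ (whose nodal values \emph{are} $O(\delta)$-close to $f_i^{\delta}$, up to the constant $C_a$), obtain $\|f^{\epsilon_1}-f_0\|_{H^1(\mathscr{T})}\le C(h_1+\sqrt{\delta})$, and then invoke Lemma \ref{lemma3} to absorb $\|f-f_0\|$ into the separate $\mu|\ln\mu|$ term; alternatively, carry $C_a(\delta+\mu|\ln\mu|)$ through your interpolation step in place of $\delta$. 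Either way, the exponent-$1/2$ mechanism you describe is the right one; it is only the identification of what the data are $\delta$-close to that needs correcting.
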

    
	Using Lemmas \ref{lemma3} and \ref{lemma4} with standard arguments of approximation theory, it is possible to introduce the following Theorem:
	
	\begin{theorem} $f^{\delta}$, defined in \eqref{minProbFdelta}, is a stable approximation to the exact coefficient function $f$. In addition, there is an estimate for the rate of convergence:
	\begin{equation}
        \label{Thm4Estimate}
		\norm{f-f^{\delta}}_{L^2(\mathscr{T})} = \mathcal{O}(\mu| \ln \mu|+h_1+\sqrt{\delta}).
	\end{equation}
	
	Moreover, if $\mu=\mathcal{O}(\delta^{\varepsilon_1+1/2})$ ($\varepsilon_1$ is any positive constant) and $h_1=\mathcal{O}(\sqrt{\delta})$, then we have the following estimate:
	\begin{equation*}
		\norm{f-f^{\delta}}_{L^2(\mathscr{T})} = \mathcal{O}(\sqrt{\delta}).
	\end{equation*}
	\end{theorem}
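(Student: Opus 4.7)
The plan is to follow the same strategy used in the proof of Theorem \ref{theorem3}, namely decompose the error via the triangle inequality
\[
\|f-f^{\delta}\|_{L^2(\mathscr{T})} \leq \|f-f_0\|_{L^2(\mathscr{T})} + \|f_0-f^{\epsilon_1}\|_{L^2(\mathscr{T})} + \|f^{\epsilon_1}-f^{\delta}\|_{L^2(\mathscr{T})},
\]
where $f_0(t)=(u^0(t)+u^1(t))/(1-2x_{t.p.}(t))$ is the pre-approximative function introduced in Lemma \ref{lemma3}, and $f^{\epsilon_1}$ is the regularized minimizer from \eqref{fEpsilon}. Each term will then be controlled by one of the ingredients already established above.

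First, I would bound the modeling error $\|f-f_0\|_{L^2(\mathscr{T})}$ directly by Lemma \ref{lemma3}, which yields the $\mathcal{O}(\mu|\ln\mu|)$ contribution arising from the displacement between the true transition point $x_{t.p.}(t)$ and its leading-order approximation $x_0(t)$. Next, for the smoothing error $\|f_0-f^{\epsilon_1}\|_{L^2(\mathscr{T})}$, I would invoke Lemma \ref{lemma4}: since $H^1(\mathscr{T})$ controls $L^2(\mathscr{T})$, we get a bound of the form $C_4(h_1+\sqrt{\delta})$, giving the $h_1+\sqrt{\delta}$ terms. Finally, the residual $\|f^{\epsilon_1}-f^{\delta}\|_{L^2(\mathscr{T})}$ can be absorbed: by the minimization property of $f^{\epsilon_1}$ together with Lemma \ref{fiNoiseEst}, which quantifies the noise level of the reconstructed point values $f_i^{\delta}$ from \eqref{restoreeqft} as $\mathcal{O}(\delta+\mu|\ln\mu|)$, this piece contributes no new order beyond what is already present.

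Adding the three bounds gives the main estimate \eqref{Thm4Estimate}. For the second assertion, I would substitute $\mu=\mathcal{O}(\delta^{\varepsilon_1+1/2})$ and $h_1=\mathcal{O}(\sqrt{\delta})$ into the right-hand side and observe that $\delta^{\varepsilon_1+1/2}|\ln\delta|=o(\sqrt{\delta})$ as $\delta\to 0^+$, so the dominant term is $\sqrt{\delta}$, exactly as in the final step of the proof of Theorem \ref{theorem3}.

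The main obstacle I anticipate is verifying that the noise bound from Lemma \ref{fiNoiseEst} interacts correctly with the quadratic minimization \eqref{fEpsilon}: one must check that plugging the exact function $f$ (or $f_0$) into the functional defining $f^{\epsilon_1}$ produces an upper bound on the discrepancy and regularization terms consistent with $\delta^2$, so that Lemma \ref{lemma4} applies with the effective noise level $\delta+\mu|\ln\mu|$ rather than $\delta$ alone. Once this bookkeeping is in place, the rest is routine and parallels the argument for \textbf{IP1} verbatim.
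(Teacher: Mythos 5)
Your proposal is correct and follows essentially the same route as the paper: a triangle-inequality decomposition with Lemma \ref{lemma3} controlling the modeling error $\|f-f_0\|_{L^2(\mathscr{T})}$ by $\mathcal{O}(\mu|\ln\mu|)$ and Lemma \ref{lemma4} controlling the regularization/smoothing error by $C_4(h_1+\sqrt{\delta})$, followed by substitution of the parameter choices for the second claim. Your extra attention to the third term and to the effective noise level $\mathcal{O}(\delta+\mu|\ln\mu|)$ from Lemma \ref{fiNoiseEst} is in fact more careful than the paper's own two-term argument, which silently identifies $f^{\delta}$ with the minimizer of \eqref{fEpsilon}, but it does not change the substance of the proof.
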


    \begin{proof}
   The proof follows the same structure as the proof of Theorem \ref{theorem3}. By using the triangle inequality, we start with:
   \[
   \|f - f^{\delta}\|_{L^2(\mathscr{T})} \leq \|f - f_0\|_{L^2(\mathscr{T})} + \|f_0 - f^{\delta}\|_{L^2(\mathscr{T})}.
   \]
   
   For the first term, applying Lemma \ref{lemma3}, we have $\|f - f_0\|_{L^2(\mathscr{T})} \leq C_3 \mu| \ln \mu|$, while for the second term, from Lemma \ref{lemma4} we can derive $\|f_0 - f^{\delta}\|_{L^2(\mathscr{T})} \leq C_4(h_1 + \sqrt{\delta})$. Combining these results, we obtain the estimate \eqref{Thm4Estimate}.  
\end{proof}

\section{Simulations}
    \label{examples}

In this section, we explore the cases where the coefficient functions are $ F(x,t)=f(x)$ and $F(x,t)=f(t) $ for both the forward and inverse problems. Since explicit solutions for the considered non-linear PDEs are unavailable, we evaluate the accuracy of the obtained asymptotic solution (e.g. $U_0(x,t)$) by using a high-order numerical solution on fine grids as the reference solution (treated as the exact solution). The relative error of the asymptotic solution is then calculated on the same fine grids.

\subsection{First example with coefficient function $f(x)$}

	\subsubsection{Forward problem}
    
    For this example, we choose the coefficient function $f(x) = x^2 + 1.5$, boundary values $u^0 = -4, u^1 = 4.3$, and set the small parameter $\mu=0.02$.  This leads us to the following initial boundary value problem (the initial condition will be designed later):
	\begin{equation}\label{ExProb1}
		\begin{cases}
			\displaystyle \mu\qty(\pdv[2]{u}{x} - \pdv{u}{t}) = -u\pdv{u}{x} + f(x)u, \quad x \in (0,1),\, t\in [0,2],\\
			u(0,t) = -4,\quad u(1,t) = 4.3, \quad t\in [0,2],\\
			u(x,t) = u(x,t+T), \quad x \in (0,1), \, t\in [0,2].
		\end{cases}
	\end{equation}

To verify Assumption \ref{Cond1}, we consider:
$$\max_{\substack{x \in [0,1] \\  t \in \mathbb{R}_+}} \left( \int\limits_{0}^{x} (\tau^2 + 1.5) d\tau \right)=\max_{\substack{x \in [0,1] \\  t \in \mathbb{R}_+}} \left( \int\limits_{x}^{1} (\tau^2 + 1.5) d\tau \right)=1.833.$$

Next, we observe that $u^{0} = -4 < -1.833$, $u^{1} = 4.3 > 1.833$.
Additionally, the difference between $u^{1} $ and  $u^{0} $  satisfies: $u^{1}-u^{0}=8.3>2 \times (0.02)^{\theta} \ \text{for all} \ \theta \in (0,1)$.
Since all these conditions hold, Assumption \ref{Cond1} is satisfied.

The outer functions and the function $x_0(t)$, which defines the location of the zero-order transition layer, can be determined as follows:
	\begin{align*}
		&\varphi^l(x)=-4 + 1.5 x + 0.33333 x^3, 
		&\varphi^r(x)=2.46667 + 1.5 x + 0.33333 x^3,\\
		&x_0(t) \equiv 0.486 \in (0,1).
	\end{align*}
    
Next, we evaluate the derivative of \( I \) with respect to \( x \) at \( x_0 \):
$$\frac{d I}{dx}=\frac{d \left((\varphi^l)^2-(\varphi^r)^2 \right)}{ d x} (x_0) =-25.6856<0.$$ 

This confirms that Assumption \ref{Cond2} holds for the given functions and transition layer location.

The initial function is given by the equation:
\begin{flalign*}
		&u_{init}=
        \begin{cases}
			0.333 x^3+1.5 x-4+\frac{6.465}{ \exp\left(78.555-161.637x\right)+1}, & 0\leq x\leq 0.486, \\
 0.333 x^3+1.5 x+2.467-\frac{6.467}{ \exp\left(161.697 x-78.584\right)+1}, & 0.486\leq x\leq 1,
		\end{cases}
	\end{flalign*}   
which agrees with zero-order asymptotic solution at initial time $u_{init}=U_0(x,t=0)$. Therefore, Assumption \ref{Cond3} is satisfied.

  Based on the discussion above, Assumptions \ref{Cond1}-\ref{Cond3} are satisfied, and the asymptotic solution $ U_0(x,t)$ is demonstrated in Fig.$\eqref{my x1}$. We also illustrate a numerical solution $u(x,t)$ (by the finite volume method) of the problem \eqref{ExProb1} in Fig.$\eqref{my x2}$, which is also used for the generation of noisy data in the inverse problem.

    To analyze the effectiveness of the method we calculate relative error:
	\begin{equation*}
		\frac{\norm{U_0(x,t)-u(x,t)}_{L^2([0,1] \times [0,2])}}{\norm{u(x,t)}_{L^2([0,1] \times [0,2])}} = 0.010154.
	\end{equation*}

    \begin{figure}[H]
    \centering
    \begin{subfigure}[b]{0.4\textwidth}
        \centering
        \includegraphics[width=\textwidth]{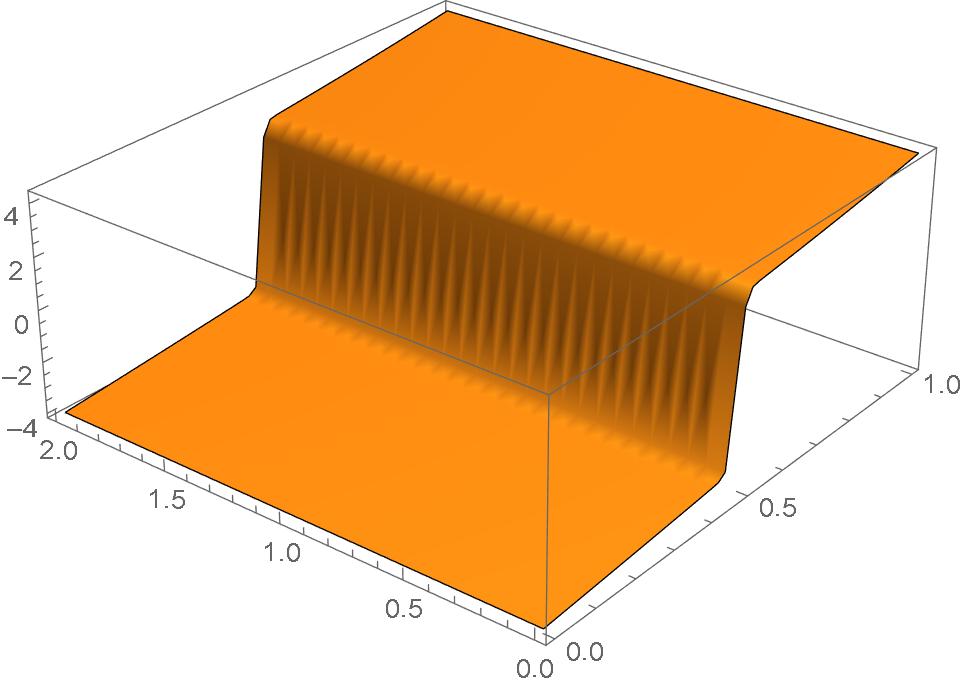}
        \caption{Asymptotic solution with $f(x)=x^2 + 1.5,x\in [0,1],t\in [0,2].$}
        \label{my x1}
    \end{subfigure}
    \hfill
    \begin{subfigure}[b]{0.4\textwidth}
        \centering
        \includegraphics[width=\textwidth]{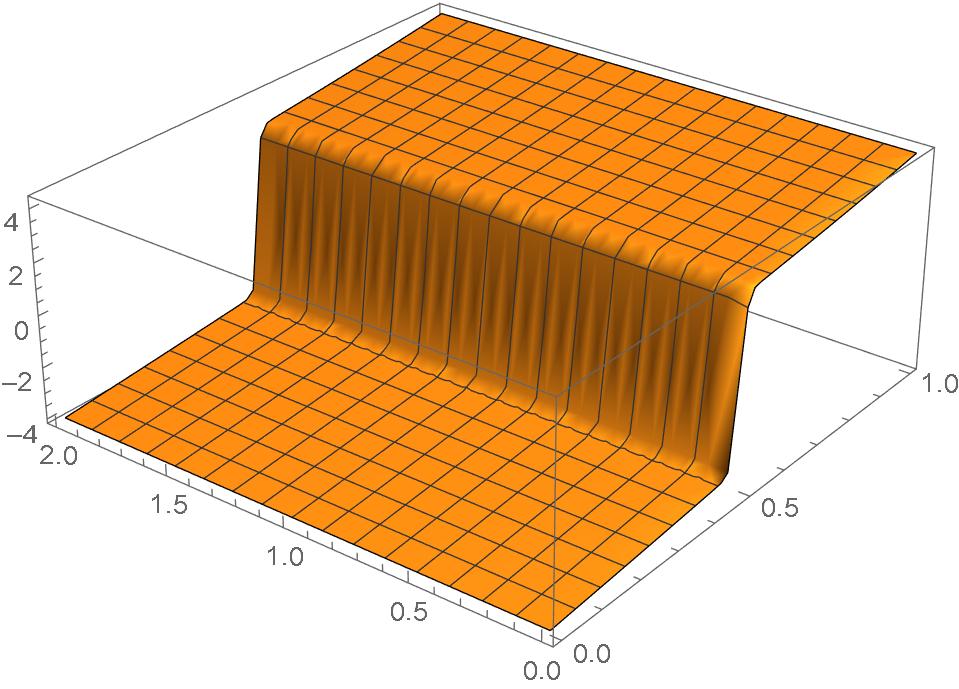}
        \caption{Numerical solution  with $f(x)=x^2 + 1.5,x\in [0,1],t\in [0,2].$}
        \label{my x2}
    \end{subfigure}
    \caption{Comparison of asymptotic and numerical solutions of equation \eqref{ExProb1}.}
\end{figure}

	\subsubsection{Inverse problem}

      We generate noisy data for $i = 0, \ldots, k$:
\begin{align}\label{noisyData}
	\omega_i^\delta := \left[1 + \delta(2 \cdot \text{rand} - 1)\right] \pdv{u(x_i,t_0)}{x},
\end{align}
	where $\delta$ is the noise level and ``rand'' returns pseudorandom values drawn from the uniform distribution [0, 1]. 
    
	
	In our numerical experiment, we use a noise level of $\delta = 0.1 \% $ or $1 \% $. The parameters are set as $t_0 = 1$, $k^l = 90$, $k^r = 110$, and $k = 200$. The grid values $\displaystyle\pdv{u(x_i,t_0)}{x}$ are taken from the forward problem.
	
	We operate only with nodes located in two areas on both sides of the transition layer. Therefore, indices $i = 0,\ldots,k^l$ and $i = k^r,\ldots,k$ are introduced. According to standard procedure, we add a uniform noise \eqref{noisyData} to our values $\displaystyle\pdv{u(x_i,t_0)}{x}$. This step allows us to determine noisy data $\{\omega^\delta_i \}^{k^l}_{i=0}$ and $\{\omega^\delta_i \}^{k^r}_{i=0}$ in their corresponding areas.

    	From the optimization problem \eqref{minProbFdelta} we derive an approximate source function $f^\delta(x)$.
	Its graphical representation is presented through Fig.\eqref{my f(x)} and Fig.\eqref{my f(x)_1}.
	
	Lastly, the relative errors of the source function with two noisy data sets of different levels were calculated:
	
	for $\delta=0.1\%$: $\displaystyle\frac{\left\| f^{\delta}-f\right\|_{ L^2(0,1)}}{\left\| f \right\|_{L^2(0,1)}}  = 0.018169$,

	for $\delta=1\%$: $\displaystyle\frac{\left\| f^{\delta}-f\right\|_{ L^2(0,1)}}{\left\| f \right\|_{L^2(0,1)}}=0.027691$.
    
    \begin{figure}[H]
    \centering
    \begin{subfigure}[b]{0.4\textwidth}
        \centering
        \includegraphics[width=\textwidth]{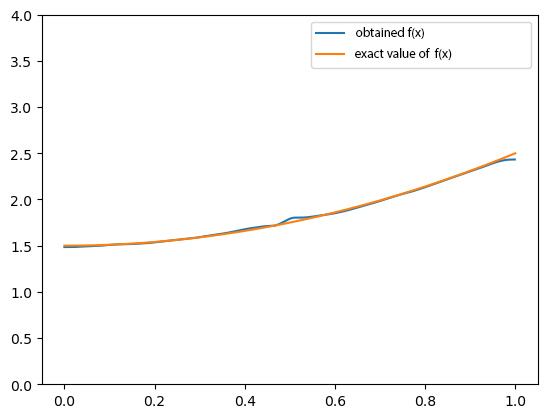}
        \caption{Reconstructed coefficient function $f^\delta (x)$, where $\delta=0.1\%$, compared to the exact original function $f(x)=x^2 + 1.5,x\in [0,1],\mu=0.02.$}
        \label{my f(x)}
    \end{subfigure}
    \hfill
    \begin{subfigure}[b]{0.4\textwidth}
        \centering
        \includegraphics[width=\textwidth]{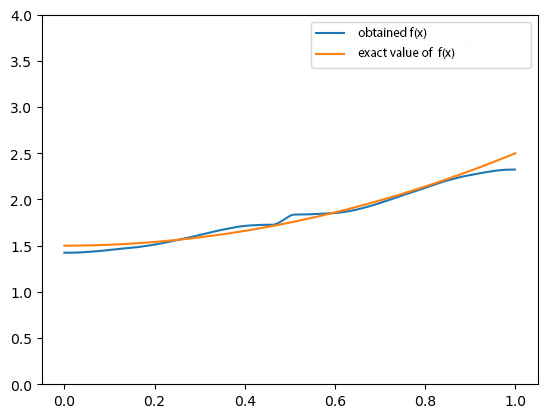}
        \caption{Reconstructed coefficient function $f^\delta(x)$, where $\delta=1\%$, compared to the exact original function $f(x)=x^2 + 1.5,x\in [0,1],\mu=0.02.$}
        \label{my f(x)_1}
    \end{subfigure}
    \caption{Comparison of reconstructed coefficient functions for different $\delta$ values.}
\end{figure}

	\subsection{Second example with coefficient function $f(t)$}
	\subsubsection{Forward problem}
	In this example, we consider problem \eqref{InitProb} with a given coefficient function $f(t) = \cos(t) + 2$, left and right boundary functions $u^{0}(t) = -4-0.5\sin(t)$, $u^{1}(t) = 5+0.7\cos(t)$, and small parameter $\mu=0.02$. I.e. we consider the following initial boundary value problem:
	\begin{equation}\label{ExProb2}
		\begin{cases}
			\displaystyle \mu\qty(\pdv[2]{u}{x} - \pdv{u}{t}) =	-u\pdv{u}{x} + f(t)u, \quad x \in (0,1),\, t\in [0,4\pi],\\
			u(0,t)=-4-0.5\sin(t),\quad u(1,t)=5+0.7\cos(t),\quad t\in [0,4\pi],\\
			u(x,t)=u(x,t+T),\quad x \in (0,1),\, t\in [0,4\pi].
		\end{cases}
	\end{equation}

This example also satisfies all the assumptions of our approach. To demonstrate this, we verify each assumption as follows. First, we note that 
$$\max_{\substack{x \in [0,1] \\ t \in [0,4\pi]}} \left( \int\limits_{0}^{x} (\cos(t) + 2) \, \mathrm{d}\tau \right) =\max_{\substack{x \in [0,1] \\ t \in [0,4\pi]}} \left( \int\limits_{x}^{1} (\cos(t) + 2) \, \mathrm{d}\tau  \right) =3.$$

Thus, Assumption \ref{Cond1} is satisfied due to the following estimates.
 
1. $\displaystyle  \max_{t \in [0,4\pi]} u^0(t) = 
\max_{t \in [0,4\pi]} \left( -4-0.5\sin(t) \right)=-4<-3$.

2. $\displaystyle  \max_{t \in [0,4\pi]} u^1(t) =  \min_{t \in [0,4\pi]} \left(  5+0.7\cos(t) \right)=5>3$.  

3. Since \(u^1(t) - u^0(t) = 9 + 0.7\cos(t) + 0.5\sin(t) > 8.13 > 0.04 \ge 2\mu^\theta \) for all \(\theta \in (0,1)\).

On the other hand, for this model problem, we can derive the exact outer functions of zero order, as well as an equation for the middle line of the inner layer, namely
	\begin{align}
		&\varphi^l(x,t)=-4 + 2 x + x \cos(t) - 0.5 \sin(t), \label{Ex2varphiL} \\
		&\varphi^r(x,t)=3 + 2 x - 0.3\cos(t) + x \cos(t), \label{Ex2varphiR} \\
		&x_0(t)=\displaystyle\frac{0.5 + 0.15\cos(t) + 0.25\sin(t)}{2 + \cos(t)}. \label{x0texample2}
	\end{align}

\begin{lemma} 
\label{lemmaEx2} 
For all $t\in [0,4\pi]$, $x_0(t)\in(0,0.5)$, where $x_0(t)$ is defined in \eqref{x0texample2}.
\end{lemma}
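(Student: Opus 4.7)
The plan is to verify the two inequalities $x_0(t)>0$ and $x_0(t)<\tfrac12$ separately, taking advantage of the fact that the denominator $D(t):=2+\cos(t)$ is strictly positive for all $t$ (indeed $D(t)\geq 1$). This lets me clear denominators without worrying about sign reversals, reducing the claim to purely trigonometric estimates on the numerator $N(t):=0.5+0.15\cos(t)+0.25\sin(t)$.

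For the lower bound $x_0(t)>0$, since $D(t)>0$ it suffices to show $N(t)>0$. I would write the oscillating part in amplitude-phase form, $0.15\cos(t)+0.25\sin(t)=A\sin(t+\phi)$ with $A=\sqrt{0.15^2+0.25^2}=\sqrt{0.085}$, and note that $A<0.3$. Therefore $N(t)\geq 0.5-A>0.2>0$ for all $t\in[0,4\pi]$, which gives $x_0(t)>0$.

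For the upper bound $x_0(t)<\tfrac12$, multiplying through by $D(t)>0$ reduces the inequality to $N(t)<\tfrac12 D(t)$, i.e.
\begin{equation*}
0.5+0.15\cos(t)+0.25\sin(t) < 1+0.5\cos(t),
\end{equation*}
which after rearrangement is equivalent to $0.25\sin(t)-0.35\cos(t)<0.5$. Again applying an amplitude estimate, the left-hand side is bounded in absolute value by $\sqrt{0.25^2+0.35^2}=\sqrt{0.185}<0.5$, so the strict inequality holds for all $t\in[0,4\pi]$.

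I do not anticipate any substantive obstacle here: the proof is essentially two applications of the Cauchy--Schwarz (or amplitude-phase) bound for linear combinations of $\sin$ and $\cos$, combined with the trivial positivity of $2+\cos(t)$. The only minor care point is to keep the inequalities strict, which follows because in both cases the numerical amplitudes $\sqrt{0.085}$ and $\sqrt{0.185}$ are strictly less than the constant terms $0.5$ they are being compared against.
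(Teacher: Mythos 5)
Your proof is correct and follows essentially the same route as the paper: positivity of $x_0$ from the positivity of numerator and denominator, and the upper bound $x_0(t)<\tfrac12$ reduced to $0.25\sin(t)-0.35\cos(t)<0.5$, settled by the amplitude bound $\sqrt{0.25^2+0.35^2}=\sqrt{0.185}<0.5$. You are merely a bit more explicit than the paper in justifying the positivity of the numerator via $\sqrt{0.085}<0.3$, which is a welcome but inessential refinement.
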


\begin{proof}
Since the numerator and denominator in \eqref{x0texample2} are always positive, it is sufficient to prove \( x_0(t) < 0.5 \). Indeed, according to 
\begin{align*}
0.5+0.35\cos(t)-0.25\sin(t)&=0.5-\sqrt{0.35^2+0.25^2}\cos{\left(t-\arctan(-\frac{5}{7})\right)}\\
&\geq 0.5-\sqrt{0.35^2+0.25^2}\approx 0.069 >0,
\end{align*}
we derive $1+0.5\cos(t) > 0.5 + 0.15\cos(t)+0.25\sin(t)$, which yields the required uniform upper bound for $x_0(t)$. 
\end{proof}

From the derivative of \( I \) with respect to \( x \) at \( x_0 \), we derive that  
\[
\frac{dI}{dx} = \frac{d \left( (\varphi^l)^2 - (\varphi^r)^2 \right)}{dx}(x_0(t)) = 2(2 + \cos(t))(\varphi^l - \varphi^r).
\]

By the explicit formulas of $\varphi^{l,r}$ in \eqref{Ex2varphiL}-\eqref{Ex2varphiR}, we have \( (\varphi^l - \varphi^r) < 0 \) for all \( x \in (0,1) \) and \( t \in [0,4\pi] \), which implies that  
\begin{equation}
\label{checkassumtion}
\frac{dI}{dx} < 0.
\end{equation}
According to Lemma \ref{lemmaEx2} and inequality \eqref{checkassumtion}, Assumption \ref{Cond2} is satisfied for the model problem \eqref{ExProb2}.

In this example, the initial function is given by the equation:
	\begin{flalign*}
		u_{init}=
        \begin{cases}
3 x-4 +\frac{6.7}{ \exp(36.292 - 167.5x)+1}, & 0\leq x\leq \frac{13}{60}, \\
 3 x+2.7 -\frac{6.7}{ \exp(167.5x - 36.292)+1}, & \frac{13}{60}\leq x\leq 1,
		\end{cases}
	\end{flalign*} 
which agrees with zero-order asymptotic solution at initial time $u_{init}=U_0(x,t=0)$. Therefore, Assumption \ref{Cond3} is satisfied.

     Thus, all the Assumption \ref{Cond1}-\ref{Cond3} are met, and the asymptotic solution $ U_0(x,t)$ is illustrated in Fig.$\eqref{my x4}$. We also draw a numerical solution $ u(x,t)$ (by the finite volume method) of the problem \eqref{ExProb2} in Fig.$\eqref{my x5}$, which is also used for the generation of noisy data in the inverse problem. The relative error is reported below. 
	\begin{equation*}
		\frac{\norm{U_0(x,t)-u(x,t)}_{L^2([0,1]\times[0,4\pi])}}{\norm{u(x,t)}_{L^2([0,1]\times[0,4\pi])}} = 0.0176743.
	\end{equation*}

    \begin{figure}[H]
    \centering
    \begin{subfigure}[b]{0.4\textwidth}
        \centering
        \includegraphics[width=\textwidth]{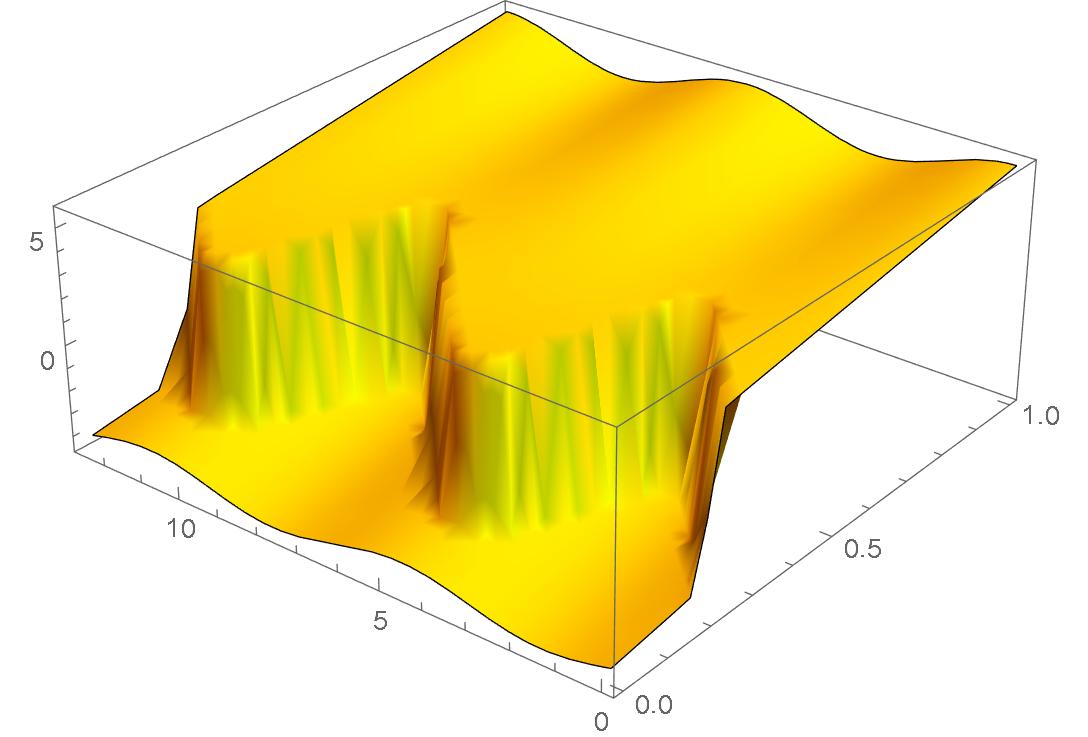}
        \caption{Asymptotic solution with $f(t)=2+\cos(t), x \in [0,1], t \in [0,4\pi].$}
        \label{my x4}
    \end{subfigure}
    \hfill
    \begin{subfigure}[b]{0.4\textwidth}
        \centering
        \includegraphics[width=\textwidth]{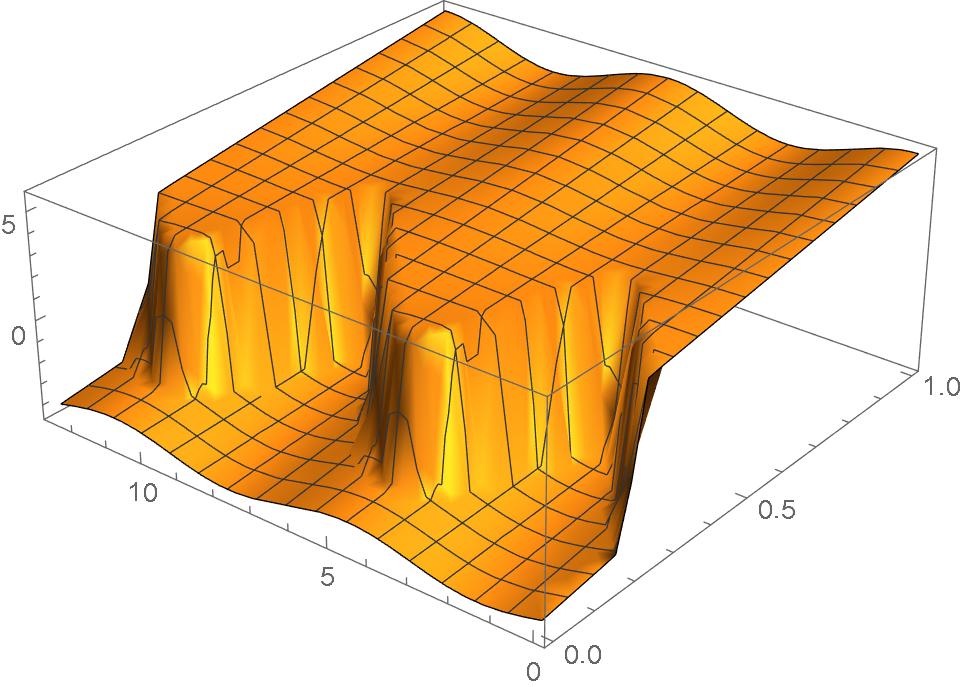}
        \caption{Numerical solution with $f(t)=2+\cos(t), x \in [0,1], t \in [0,4\pi].$}
        \label{my x5}
    \end{subfigure}
    \caption{Comparison of asymptotic and numerical solutions of equation \eqref{ExProb2}.}
\end{figure}
	
	\subsubsection{Inverse problem}
	
	Now we consider model \eqref{ExProb2} in the case when the source function is $f(t)$. Moreover, we assume that only the values of $\{u^0(t_i),u^1(t_i),x_{t.p.}(t_i)\}$ are known.

    The function $x_{t.p.}(t)$, which defines the location of the inner layer for the numerical solution, is obtained by numerically solving the equation \eqref{barUatTP}, i.e. we find an approximate solution of $x_{t.p.}(t)$ by solving the following nonlinear equation. 
    \begin{equation*}\label{obtainxtp}
		u(x_{t.p.}(t),t) = -0.5 + 2 x_{t.p.}(t) + x_{t.p.}(t) \cos(t) - 0.15 \cos(t) - 0.25 \sin(t) .
	\end{equation*}
    
The resulting function is shown in Figure \ref{fig:xtp}. Numerically, we find that $ x_{t.p.}(t) >0$, and the maximum value of the function \( x_{t.p.}(t) \) is approximately $\max\limits_{t \in [0, 4\pi]} x_{t.p.}(t) \approx 0.442$. Therefore, there exists a value \( a = 0.05 \) that satisfies Assumption \ref{Cond4}.

\begin{figure}[!htb]
\begin{center}
\includegraphics[width=0.4\linewidth]{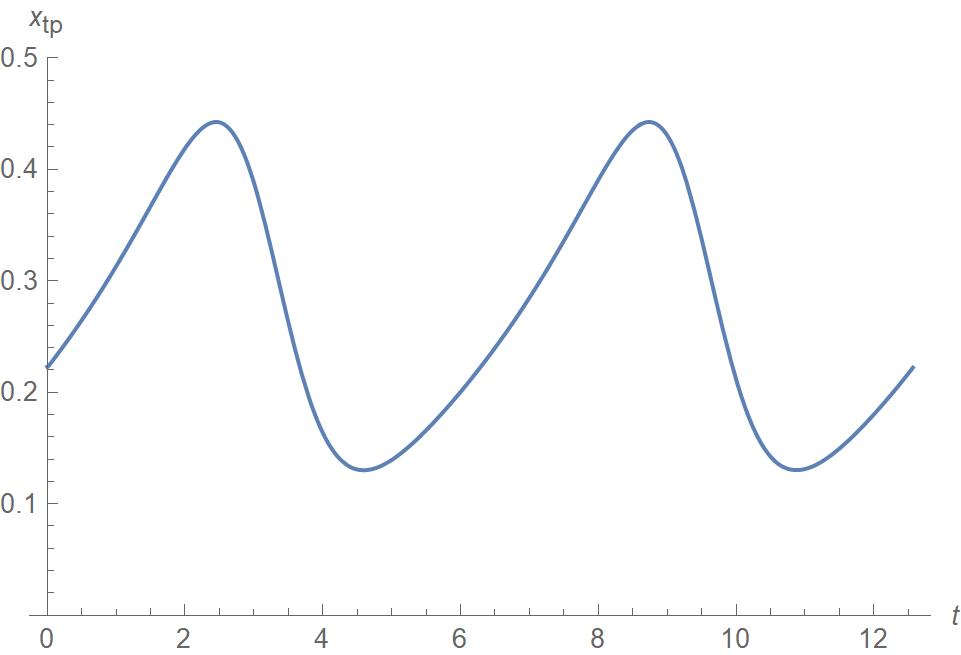}
\caption{Function $x_{t.p.}$ numerically obtained from equation \eqref{barUatTP}, representing the location of the transition layer.}
\label{fig:xtp}
\end{center}
\end{figure}

    According to the previous example, we synthesize measurement data with indexes $i = 0, \ldots, k$ from the direct problem \eqref{ExProb2} by using the numerical method (see Fig.\eqref{my x5}):
\begin{align*}
	&u_i^{0,\delta} := \left[1 + \delta(2 \cdot \text{rand} - 1)\right] u^{0}(t_i),\\
	&u_i^{1,\delta} := \left[1 + \delta(2 \cdot \text{rand} - 1)\right] u^{1}(t_i),\\
	&x_{{t.p.},i}^\delta := \left[1 + \delta(2 \cdot \text{rand} - 1)\right] x_{t.p.}(t_i).
\end{align*}
	

	
	During calculations $\delta = 0.1\%$ or $1\%$ and $k = 40$ were used. We constructed the smoothed data $f^{\epsilon_1}(t)$ according to the optimization problem \eqref{fEpsilon}:
	\begin{equation*}\label{f1epsilon}
		f^{\epsilon_1}(t) = \mathop{\arg\min}_{C^1(0,4\pi)}\frac{1}{k+1} \sum_{k=0}^k(v(t_i)-f_i^{\delta})^2 + \epsilon_1\norm{\displaystyle\pdv[2]{v}{t}}_{L^2(0,4\pi)},
	\end{equation*}
	where the points $f_i^{\delta}$ was obtained from equation \eqref{restoreeqft}. 
    
    The reconstructed time-dependent functions $f^{\epsilon_1}(t)$ for different noise levels are displayed in Figures \ref{myF(t)}.  The relative errors of the $f^{\epsilon_1}$ give us values:
	
	for $\delta=0.1\%$: $\displaystyle\frac{\left\| f^{\epsilon_1}-f\right\|_{ L^2(0,4\pi)}}{\left\| f \right\|_{L^2(0,4\pi)}}=0.018942$,
	
	for $\delta=1\%$: $\displaystyle\frac{\left\| f^{\epsilon_1}-f\right\|_{ L^2(0,4\pi)}}{\left\| f \right\|_{L^2(0,4\pi)}}=0.039787$.
	
\begin{figure}[!tbh]
    \centering
    \begin{subfigure}[b]{0.4\textwidth}
        \centering
        \includegraphics[width=\textwidth]{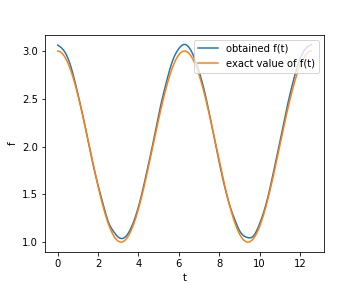}
        \caption{Reconstructed coefficient function $f^{\epsilon_1} (t)$, for $\delta = 0.1\%$ compared to the exact original function $f(t)=\cos(t)+2,  t \in [0,4\pi], \mu=0.02.$}
    \end{subfigure}
    \hfill
    \begin{subfigure}[b]{0.4\textwidth}
        \centering
        \includegraphics[width=\textwidth]{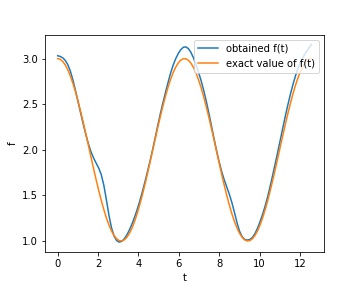}
        \caption{Reconstructed coefficient function $f^{\epsilon_1} (t)$, for $\delta = 1\%$ compared to the exact original function $f(t)=\cos(t)+2,  t \in [0,4\pi], \mu=0.02.$}
    \end{subfigure}
    \caption{Comparison of reconstructed coefficient functions for different $\delta$ values.}
\label{myF(t)}
\end{figure}

\section{Conclusion}
\label{conclusion}

This study presents a comprehensive analysis of singularly perturbed reaction-diff usion-advection equations, with a focus on solutions exhibiting moving fronts. Using asymptotic expansion methods, we constructed approximate solutions for non-stationary differential equations with time-dependent right-hand sides. Building on this framework, we extended an asymptotic-based algorithm to solve inverse problems aimed at determining spatially and temporally dependent coefficient functions, establishing key conditions to ensure the existence of solutions. Additionally, we derived convergence rate estimates for the inverse problem and, through asymptotic analysis, determined the initial conditions necessary for numerically solving the main problem. The effectiveness of the proposed approach was demonstrated through two groups of numerical experiments, highlighting its precision in reconstructing coefficient functions under various noise levels. This research contributes to the deeper understanding of reaction-diffusion-advection equations and has significant implications for applications in environmental modeling, medical imaging, and materials science.


\end{fulltext}
	\bibliographystyle{unsrt} 
        \bibliography{ref}

\begin{thebibliography}{10}

\bibitem{Cos2014}
C.~Cosner.
\newblock Reaction-diffusion-advection models for the effects and evolution of
  dispersal.
\newblock {\em Discrete and Continuous Dynamical Systems}, 34(5):1701--1745,
  May 2014.

\bibitem{ComNegBrau2023}
A.~Comolli, L.~Negrojevic, F.~Brau, and A.~De~Wit.
\newblock Effect of radial advection on autocatalytic reaction-diffusion
  fronts.
\newblock {\em Physical Chemistry Chemical Physics}, 25:10604--10619, March
  2023.

\bibitem{ZhaForGul2016}
Y.~Zhang, G.L. Lin, P.~Forssen, M.~Gulliksson, T.~Fornstedt, and X.L. Cheng.
\newblock A regularization method for the reconstruction of adsorption
  isotherms in liquid chromatography.
\newblock {\em Inverse Problems}, 32(10):105005, August 2016.

\bibitem{ZhaLinGul2017}
Y.~Zhang, G.~Lin, M.~Gulliksson, P.~Forssen, T.~Fornstedt, and X.~Cheng.
\newblock An adjoint method in inverse problems of chromatography.
\newblock {\em Inverse Problems in Science and Engineering}, 25(8):1112--1137,
  August 2016.

\bibitem{Emken2020}
N.~Emken and C.~Engwer.
\newblock A reaction-diffusion-advection model for the establishment and
  maintenance of transport-mediated polarity and symmetry breaking.
\newblock {\em Frontiers in Applied Mathematics and Statistics}, 6, November
  2020.

\bibitem{Erofeev2021}
V.I. Erofeev and A.V. Leonteva.
\newblock Plane longitudinal waves in a fluid-saturated porous medium with a
  nonlinear relationship between deformations and displacements of the liquid
  phase.
\newblock {\em Computational Continuum Mechanics}, 14(1):5--11, June 2021.

\bibitem{ManPim2020}
N.~Manitcharoen and B.~Pimpunchat.
\newblock Analytical and {{Numerical Solutions}} of {{Pollution Concentration}}
  with {{Uniformly}} and {{Exponentially Increasing Forms}} of {{Sources}}.
\newblock {\em Journal of Applied Mathematics}, 2020:e9504835, April 2020.

\bibitem{Nefedov2013}
N.N. Nefedov, L.~Recke, and K.R. Schneider.
\newblock Existence and asymptotic stability of periodic solutions with an
  interior layer of reaction-advection-diffusion equations.
\newblock {\em Journal of Mathematical Analysis and Applications},
  405(1):90--103, September 2013.

\bibitem{Levashova2018}
N.T. Levashova, N.N. Nefedov, and A.V. Yagremtsev.
\newblock Existence of a solution in the form of a moving front of a
  reaction-diffusion-advection problem in the case of balanced advection.
\newblock {\em Izvestiya: Mathematics}, 82(5):984--1005, October 2018.

\bibitem{amp20221012}
M.~M. Rahaman, H.~Takia, Md.~K. Hasan, Md.~B. Hossain, S.~Mia, and K.~Hossen.
\newblock Application of advection diffusion equation for determination of
  contaminants in aqueous solution: A mathematical analysis.
\newblock {\em Applied Mathematics and Physics}, 10(1):24--31, March 2022.

\bibitem{Liang2010}
J.~Liang, G.~Zeng, S.~Guo, A.~Wei, X.~Li, L.~Shi, and C.~Du.
\newblock Optimal solute transport in heterogeneous aquifer: coupled inverse
  modelling.
\newblock {\em International Journal of Environment and Pollution},
  42(1/2/3):258, August 2010.

\bibitem{Got1965}
B.S. Gottfried.
\newblock {A Mathematical Model of Thermal Oil Recovery in Linear Systems}.
\newblock {\em Society of Petroleum Engineers Journal}, 5(03):196--210,
  September 1965.

\bibitem{Liu2023}
T.~Liu, D.~Ouyang, L.~Guo, R.~Qiu, Y.~Qi, W.~Xie, Q.~Ma, and C.~Liu.
\newblock Combination of multigrid with constraint data for inverse problem of
  nonlinear diffusion equation.
\newblock {\em Mathematics}, 11(13):2887, June 2023.

\bibitem{Tilioua2012}
A.~Tilioua, L.~Libessart, A.~Joulin, S.~Lassue, B.~Monod, and G.~Jeandel.
\newblock Determination of physical properties of fibrous thermal insulation.
\newblock {\em EPJ Web of Conferences}, 33:02009, October 2012.

\bibitem{Isa1990}
V.~Isakov.
\newblock {\em Inverse Source Problems}.
\newblock Mathematical Surveys and Monographs. American Mathematical Society,
  1990.

\bibitem{ChaZha2022}
D.~Chaikovskii and Y.~Zhang.
\newblock Convergence analysis for forward and inverse problems in singularly
  perturbed time-dependent reaction-advection-diffusion equations.
\newblock {\em Journal of Computational Physics}, 470:111609, June 2022.

\bibitem{Chaikovskii2D2023}
D.~Chaikovskii, A.~Liubavin, and Y.~Zhang.
\newblock Asymptotic expansion regularization for inverse source problems in
  two-dimensional singularly perturbed nonlinear parabolic pdes.
\newblock {\em CSIAM Transactions on Applied Mathematics}, 4(4):721--757, June
  2023.

\bibitem{Chaikovskii3D2023}
D.~Chaikovskii and Y.~Zhang.
\newblock Solving forward and inverse problems involving a nonlinear
  three-dimensional partial differential equation via asymptotic expansions.
\newblock {\em IMA Journal of Applied Mathematics}, 88(4):525--557, August
  2023.

\bibitem{Lukyanenko2019}
D.V. Lukyanenko, V.B. Grigorev, V.T. Volkov, and M.A. Shishlenin.
\newblock Solving of the coefficient inverse problem for a nonlinear singularly
  perturbed two-dimensional reaction-diffusion equation with the location of
  moving front data.
\newblock {\em Computers \& Mathematics with Applications}, 77(5):1245--1254,
  March 2019.

\bibitem{Lukyanenko2019v2}
D.V. Lukyanenko, M.A. Shishlenin, and V.T. Volkov.
\newblock Asymptotic analysis of solving an inverse boundary value problem for
  a nonlinear singularly perturbed time-periodic reaction-diffusion-advection
  equation.
\newblock {\em Journal of Inverse and Ill-posed Problems}, 27(5):745--758, July
  2019.

\bibitem{Lukyanenko2020}
D.V. Lukyanenko, I.V. Prigorniy, and M.A. Shishlenin.
\newblock Some features of solving an inverse backward problem for a
  generalized burgers equation.
\newblock {\em Journal of Inverse and Ill-posed Problems}, 28(5):641--649,
  September 2020.

\bibitem{Lukyanenko2021}
D.~Lukyanenko, T.~Yeleskina, I.~Prigorniy, T.~Isaev, A.~Borzunov, and
  M.~Shishlenin.
\newblock Inverse problem of recovering the initial condition for a nonlinear
  equation of the reaction-diffusion-advection type by data given on the
  position of a reaction front with a time delay.
\newblock {\em Mathematics}, 9(4):342, February 2021.

\bibitem{Lukyanenko2021v2}
D.V. Lukyanenko, A.A. Borzunov, and M.A. Shishlenin.
\newblock Solving coefficient inverse problems for nonlinear singularly
  perturbed equations of the reaction-diffusion-advection type with data on the
  position of a reaction front.
\newblock {\em Communications in Nonlinear Science and Numerical Simulation},
  99:105824, August 2021.

\bibitem{Shishatskii1988}
S.P. Shishat'skii.
\newblock Interior problems for elliptic and parabolic equations and carlemans
  estimates with distributive singularities.
\newblock {\em Soviet mathematics - doklady}, 36(3):516--518, February 1988.

\bibitem{Bukhgeim1993}
A.L. Bukhgeim.
\newblock Extension of solutions of elliptic equations from discrete sets.
\newblock {\em Journal of Inverse and Ill-posed Problems}, 1(1):17--32, January
  1993.

\bibitem{Tataru2004}
D.~Tataru.
\newblock Unique continuation problems for partial differential equations.
\newblock In Christopher~B. Croke, Michael~S. Vogelius, Gunther Uhlmann, and
  Irena Lasiecka, editors, {\em Geometric Methods in Inverse Problems and PDE
  Control}, pages 239--255, New York, NY, 2004. Springer New York.

\bibitem{Antipov2014}
N.N.~Nefedov E.A.~Antipov, N.T.~Levashova.
\newblock Asymptotics of the front motion in the reaction-diffusion-advection
  problem.
\newblock {\em Computational Mathematics and Mathematical Physics},
  54(10):1536--1549, October 2014.

\bibitem{Hess1991-ha}
P.~Hess.
\newblock {\em Periodic-parabolic boundary value problems and positivity}.
\newblock Pitman Research Notes in Mathematics Series. John Wiley \& Sons,
  Nashville, TN, March 1991.

\end{thebibliography}

\section{Appendix: Proofs of Theorems \ref{existenceTheorem} and \ref{asymptEstimates}}

\begin{proof}[Proof of Theorem \ref{existenceTheorem} (Existence and asymptotic solution)]

We construct upper and lower solutions $\beta(x,t,\mu)$ and $\alpha(x,t,\mu)$ for the problem \eqref{InitProb}. These functions must satisfy the following conditions:

\begin{enumerate}
  \item[\textbf{(C1):}] Ordering condition, ensure that the following inequality holds:
  \[
  \alpha(x, t, \mu) < \beta(x, t, \mu), \quad \forall (x, t) \in \overline{\Omega}, \quad \text{where } \overline{\Omega} := \{(x,t) :  x \in [0,1], t \in \mathbb{R}_+\}.
  \]

  \item[\textbf{(C2):}]  The differential operators \( L[\beta] \) and \( L[\alpha] \) should satisfy:
  \[
 L[\beta]:= \mu \left( \frac{\partial^2 \beta}{\partial x^2} - \frac{\partial \beta}{\partial t} \right) - \left( \beta \frac{\partial \beta}{\partial x} + F(x, t) \beta \right) \leq 0,
  \]
  and
  \[
  L[\alpha]:=  \mu \left( \frac{\partial^2 \alpha}{\partial x^2} - \frac{\partial \alpha}{\partial t} \right) - \left( \alpha \frac{\partial \alpha}{\partial x} + F(x, t) \alpha \right) \geq 0,
  \]
  for all \((x, t) \in \overline{\Omega}\).

  \item[\textbf{(C3):}] At the boundary points for \( t \in \mathbb{R}_+ \), the following inequalities must hold:
  \[
  \alpha(0, t, \mu) \leq u^{0}(t) \leq \beta(0, t, \mu),
  \]
  and
  \[
  \alpha(1, t, \mu) \leq u^{1}(t) \leq \beta(1, t, \mu).
  \]

  \item[\textbf{(C4):}] Continuity conditions for non-smooth solutions:
  
  \begin{itemize}
    \item If \(\beta\) is not smooth at some \( x = x_{\beta}(t) \) within \( (0, 1) \), then:
    \[
    \left. \frac{\partial \beta}{\partial x} \right|_{x = x_{\beta}(t)-0} - \left. \frac{\partial \beta}{\partial x} \right|_{x = x_{\beta}(t)+0} \geq 0.
    \]
    
    \item Similarly, if \(\alpha\) is not smooth at some \( x = x_{\alpha}(t) \) within \( (0, 1) \), then:
    \[
    \left. \frac{\partial \alpha}{\partial x} \right|_{x = x_{\alpha}(t)-0} - \left. \frac{\partial \alpha}{\partial x} \right|_{x = x_{\alpha}(t)+0} \leq 0.
    \]
  \end{itemize}
\end{enumerate}

We construct the upper solution as follows:

\begin{align*}
\beta^{l}(x, t, \mu) &= U_{n+1}^{l}\Big|_{\xi_{\beta}} + \mu^{n+1}\left[\epsilon^{l}(x,t) + q_{0}^{l}(\xi_{\beta}, t) + \mu q_{1}^{l}(\xi_{\beta}, t)\right], \\
&\qquad\qquad\qquad\qquad \text{for } 0 \leq x \leq x_{\beta}, \, \xi_{\beta} \leq 0, \, t \in \mathbb{R}_+ , \\
\beta^{r}(x, t, \mu) &= U_{n+1}^{r}\Big|_{\xi_{\beta}} + \mu^{n+1}\left[\epsilon^{r}(x,t) + q_{0}^{r}(\xi_{\beta}, t) + \mu q_{1}^{r}(\xi_{\beta}, t)\right], \\
&\qquad\qquad\qquad\qquad \text{for } x_{\beta} \leq x \leq 1, \, \xi_{\beta} \geq 0, \, t \in \mathbb{R}_+.
\end{align*}

Here, \(\xi_{\beta}\) is given by:
\[
\xi_{\beta} = \frac{x - x_{\beta}(t)}{\mu},
\]
where \(x_{\beta}(t) = X_{n+1}(t) - \mu^{n+1} \delta(t)\), with \(\delta(t)\) being a positive function to be determined later.

The functions $\epsilon^{l,r}(x,t)$ are defined as solutions to the following boundary value problems:

\[
\begin{cases}
\epsilon^{l,r}(x,t) \left( \frac{\partial \varphi^{l,r}(x, t)}{\partial x} -f(x, t)   \right) +  \varphi^{l,r}(x, t)  \frac{d \epsilon^{l,r}(x,t)}{d x}= R(t) , \\
\epsilon^{l}(0,t) = R^{l}(t), \quad \epsilon^{r}(1,t) = R^{r}(t).
\end{cases}
\]
where functions $R,R^{l},R^{r}$ have positive values for $ t \in \mathbb{R}_+ $.

The functions $\epsilon^{l,r}(x,t)$ have explicit solutions:

\begin{multline*} 
\epsilon^{l}= e^{\left(\int_x^1 \frac{\frac{\partial \varphi^{l} (\tau ,t)}{\partial \tau}-f(\tau ,t)}{\varphi^{l}(\tau ,t)} \, d\tau \right)} \\ 
\cdot \left( R^{l}(t) e^{ \left(\int_1^0 \frac{\frac{\partial \varphi^{l} (\tau ,t)}{\partial \tau}-f(\tau ,t)}{ \varphi^{l} (\tau ,t)} \,
   d\tau \right)} +\int_0^x \frac{R(t) e^{ \left(\int_1^{\zeta} \frac{\frac{\partial \varphi^{l} (\tau ,t)}{\partial \tau}-f(\tau ,t)}{ \varphi^{l} (\tau ,t)} \, d\tau \right)}}{\varphi^{l} (\zeta,t)} \, d\zeta \right),
\end{multline*}

\begin{multline*} \epsilon^{r}=e^{\left(\int_x^1 \frac{\frac{\partial \varphi^{r} (\tau ,t)}{\partial \tau}-f(\tau,t)}{\varphi^{r} (\tau,t)} \, d\tau\right)} \left(R^{r}(t)-\int_x^1 \frac{R(t) e^{\left(\int_1^{\zeta}
   \frac{\frac{\partial \varphi^{r} (\tau ,t)}{\partial \tau}-f(\tau,t)}{\varphi^{r}(\tau,t)} \, d\tau\right)}}{\varphi^{r}(\zeta,t)} \, d\zeta\right).
\end{multline*}
The functions $\epsilon^{l,r}(x,t)$ are positive for big enough $R^{l}(t),R^{r}(t)$.

The functions $q_{0}^{l,r}\left(\xi_{\beta}, t\right)$ are solutions of:
\begin{equation}\label{q0equation}
\begin{cases}
 \frac{\partial^2 q_0^{l,r}}{\partial \xi_\beta^2}
+  \left( \varphi^{l,r}(x_0(t), t) + Q_0^{l,r}  \right) \frac{\partial q_0^{l,r}}{\partial \xi_\beta} 
+   q_0^{l,r} \frac{\partial Q_0^{l,r}}{\partial \xi_\beta}
= \left( \frac{\partial \varphi^{l,r}}{\partial x_0}  \delta(t) -\epsilon^{l,r}(x_0(t),t) \right) \frac{\partial Q_0^{l,r}}{\partial \xi_\beta}, \\
q_{0}^{l,r}(0, t)  = - \epsilon^{l,r}\left(x_{\beta}\right)-\delta(t)\frac{\partial \varphi^{l,r}(x_0(t), t)}{\partial x_0} , \quad q_{0}^{l,r}(\mp \infty, t) = 0.
\end{cases}
\end{equation}

The functions $q_{0}^{l,r}\left(\xi_{\beta}, t\right)$ can be written explicitly:
\begin{multline} \label{q0explisit}
q_{0}^{l,r} (\overline{\xi}, t)=z^{l,r}(\overline{\xi},t) \bigg( q_{0}^{l,r}(0, t)   \\
 +\int_{0}^{\overline{\xi}} \frac{1}{z^{l,r}(s,t)} \int_{\mp \infty}^{s} \left( \frac{\partial \varphi^{l,r}(x_0(t), t)}{\partial x_0}  \delta(t) -\epsilon^{l,r}(x_0(t),t) \right) \frac{\partial Q_0^{l,r}}{\partial \xi_\beta} (\eta,t) d\eta ds \bigg),
\end{multline}
where $ \displaystyle z^{l,r}(\xi,t)= \left( \frac{\partial Q_0^{l,r}}{\partial \xi} (0,t) \right)^{-1} \frac{\partial Q_0^{l,r}}{\partial \xi} (\xi,t)$.

The equations defining the functions \( q_{1}^{l,r}\left(\xi_{\beta}, t\right) \) are derived in a similar way to \eqref{q0equation}, by equating the coefficients at the small parameter \( \mu^2 \), with the additional boundary conditions \( q_{1}^{l,r}(0, t) = 0 \) and \( q_{1}^{l,r}(\mp \infty, t) = 0 \).

The lower solution \(\alpha(x, t, \mu)\) is constructed in an analogous manner:

\begin{align*}
\alpha^{l}(x, t, \mu) &= U_{n+1}^{l}\Big|_{\xi_{\alpha}} - \mu^{n+1}\left[\epsilon^{l}(x,t) + q_{0}^{l}(\xi_{\alpha}, t) + \mu q_{1}^{l}(\xi_{\alpha}, t)\right], \\
&\qquad\qquad\qquad\qquad \text{for } 0 \leq x \leq x_{\alpha}, \, \xi_{\alpha} \leq 0, \, t \in \mathbb{R}_+, \\
\alpha^{r}(x, t, \mu) &= U_{n+1}^{r}\Big|_{\xi_{\alpha}} - \mu^{n+1}\left[\epsilon^{r}(x,t) + q_{0}^{r}(\xi_{\alpha}, t) + \mu q_{1}^{r}(\xi_{\alpha}, t)\right], \\
&\qquad\qquad\qquad\qquad \text{for } x_{\alpha} \leq x \leq 1, \, \xi_{\alpha} \geq 0, \, t \in \mathbb{R}_+.
\end{align*}

Here, \(\xi_{\alpha}\) is defined as:
\[
\xi_{\alpha} = \frac{x - x_{\alpha}(t)}{\mu},
\]
where \(x_{\alpha}(t) = X_{n+1}(t) + \mu^{n+1} \delta(t)\).

Now, we need to prove that these functions satisfy Conditions C1-C4.

C1. Verification of the ordering condition:

We analyze the difference between $\beta$ and $\alpha$ over three intervals:

\[
\beta(x, t, \mu) - \alpha(x, t, \mu) = 
\begin{cases}
\beta^{l}(x, t, \mu) - \alpha^{l}(x, t, \mu), & 0 \leq x < x_{\beta}(t), \\
\beta^{r}(x, t, \mu) - \alpha^{l}(x, t, \mu), & x_{\beta}(t) \leq x \leq x_{\alpha}(t), \\
\beta^{r}(x, t, \mu) - \alpha^{r}(x, t, \mu), & x_{\alpha}(t) < x \leq 1.
\end{cases}
\]

On the interval $[x_{\beta}(t), x_{\alpha}(t)]$, we obtain:

\begin{align*}
\beta^{r} - \alpha^{l} &= \sum_{i=0}^{n-1} \mu^{i}\left(\bar{U}_{i}^{r}(x) + Q_{i}^{r}\left(\xi_{\beta}, t\right)\right) 
 - \sum_{i=0}^{n-1} \mu^{i}\left(\bar{U}_{i}^{l}(x) + Q_{i}^{l}\left(\xi_{\alpha}, t\right)\right) + O\left(\mu^{n}\right) \\
&= \frac{\partial Q_0}{\partial \xi}\left(0, t\right)\left(\xi_{\beta} - \xi_{\alpha}\right) + O\left(\mu^{n}\right) = 2 \mu^{n-1} \delta(t)\frac{\partial Q_0}{\partial \xi}\left(0, t\right) + O\left(\mu^{n}\right), \text{for } t\in \mathbb{R}_+.
\end{align*}

Given Assumption \ref{Cond2} and equations \eqref{QL0Int} and \eqref{QR0Int}, we have $\frac{\partial Q_0}{\partial \xi}\left(0, t\right) > 0$. Thus, $\beta^{r} - \alpha^{l} > 0$ on the interval $\left[x_{\beta}(t), x_{\alpha}(t)\right]$, provided $\delta(t) > 0$ for $t \in \mathbb{R}_+$ and sufficiently small $\mu$.

Similar analysis for the intervals $[0, x_{\beta}(t)]$ and $[x_{\alpha}(t), 1]$ completes the verification of the ordering condition.

C2. Verification of the differential inequalities:

By substituting $\beta^{l,r}$ and $\alpha^{l,r}$ into the respective operators $L[\beta]$ and $L[\alpha]$, and considering that $\epsilon^{l,r}$, $q_{0}^{l,r}$, and $q_{1}^{l,r}$ satisfy their respective equations, we obtain the following expressions:
\[
L[\beta]=-\mu^{n+1}R+O\left(\mu^{n+2}\right)<0,\quad L[\alpha]=\mu^{n+1}R+O\left(\mu^{n+2}\right)>0,
\]
where $R > 0$ is the constant on the right-hand side of the equation corresponding to $\epsilon^{l,r}$.

C3. Verification of the boundary conditions:

The boundary conditions hold when $R^{l}$ and $R^{r}$ in the boundary conditions for $\epsilon^{l,r}$ are chosen sufficiently large.

C4. Verification of the continuity conditions:
From the explicit solution for $  q_0^{l,r}(0,t)$ \eqref{q0explisit}, we obtain
\begin{align*}
\begin{split}
& \frac{\partial q_0^l}{\partial \xi}(0,t) - \frac{\partial q_{0}^r}{\partial \xi}(0,t)  =\frac{1}{2}( \varphi^r(x_0(t),t) -\varphi^l(x_0(t),t))   \\ &\qquad\qquad
\cdot \left(  \epsilon^l(x_0(t),t)+\epsilon^r(x_0(t),t)- \delta(t) \left( \frac{\partial \varphi^{l}(x_0(t), t)}{\partial x_0}  +\frac{\partial \varphi^{r}(x_0(t), t)}{\partial x_0} \right)  \right) .
\end{split}
\end{align*}

Choosing $\delta(t)$ as the solution of:
\[
\delta(t)=\frac{\epsilon^l(x_0(t),t)+\epsilon^r(x_0(t),t)}{ \left( \frac{\partial \varphi^{l}(x_0(t), t)}{\partial x_0}  +\frac{\partial \varphi^{r}(x_0(t), t)}{\partial x_0} \right) }-\sigma,
\]
where $\sigma\geq0$, we obtain:
\[
\left.\left(\frac{\partial\beta^{l}}{\partial x}-\frac{\partial\beta^{r}}{\partial x}\right)\right|_{x=x_{\beta}(t)}=\frac{\sigma}{2}\mu^{n}\left(\varphi^{r}(x_0(t),t)-\varphi^{l}(x_0(t),t)\right)+O\left(\mu^{n+1}\right).
\]

By Assumption 1, $\varphi^{r}(x_0(t),t)-\varphi^{l}(x_0(t),t)>0$, so the right-hand side is non-negative. The same reasoning applies to the lower solution.

Thus, we have verified that the constructed upper and lower solutions satisfy all required Conditions C1-C4.
 From the existence of these upper and lower solutions, it follows that a unique solution exists. This completes the proof of the Theorem \ref{existenceTheorem}.
\end{proof}

\begin{proof}[Proof of Theorem \ref{asymptEstimates} (Asymptotic estimates)]

\textbf{ 1. Proof of estimate \eqref{estim1}: }

From the construction of the upper and lower solutions and the comparison principle \cite{Hess1991-ha}, we have:
\[
\alpha(x, t, \mu) \leq u(x, t, \mu) \leq \beta(x, t, \mu), \quad (x, t) \in \overline{\mathcal{D}}.
\]

We also know that $\beta(x, t, \mu)-\alpha(x, t, \mu)=O\left(\mu^{n-1}\right)$. Therefore,
\[
u(x, t, \mu)=\alpha(x, t, \mu)+O\left(\mu^{n-1}\right)=U_{n-2}(x, t, \mu)+O\left(\mu^{n-1}\right).
\]

Replacing $n-2$ by $n$, we obtain:
\[
\left|u(x, t, \mu)-U_{n}(x, t, \mu)\right| \leq C_{n} \mu^{n+1}.
\]

This proves estimate \eqref{estim1}.

\textbf{2. Proof of estimate \eqref{estim3}:}

 We demonstrate that the estimate \eqref{estim3} is valid by analyzing the difference \( z_n(x, t, \mu) = u(x, t, \mu) - U_n(x, t, \mu) \). The function \( z_n(x, t, \mu) \) satisfies the differential equation
\[
\mu \left( \frac{\partial^{2}z_n}{\partial x^{2}} - \frac{\partial z_n}{\partial t} \right) - \left(  U_n \frac{\partial U_n}{\partial x} -  u \frac{\partial u}{\partial x} \right) = \mu^{n+1}\psi(x, t, \mu),
\]
for \( (x, t) \in \overline{\Omega} \) with zero boundary conditions, where \( \lvert \psi(x, t, \mu) \rvert \leq c_1 \).

Defining \( r(x, t, \mu) = \mu^n \psi(x, t, \mu) \) and using a Green's function for the parabolic operator on the left-hand side, we represent \( z_n \) as follows:
\begin{multline*}
z_n(x, t, \mu) = \int_{0}^{1} G(x,  t, \zeta,  t_0) z_n(\zeta,  t_0) d\zeta \\
+\frac{\partial}{\partial x} \left( \int_{ t_{0}}^{ t}d\tau\int_{0}^{1}G_{x}(x,  t, \zeta, \tau)\frac{1}{\mu}\int_{u(\zeta,\tau,\mu)}^{U_{n}(\zeta,\tau,\mu)}( s) ds d\zeta \right).
\end{multline*}

Further analysis, following the approach in \cite{ChaZha2022}, of this representation shows that the derivative $\frac{\partial z_n}{\partial x}(x,  t, \mu)$ is bounded by $\mathcal{O}(\mu^n)$ for $ (x, t) \in \overline{\Omega} $. This completes the proof of estimation  \eqref{estim3}.

\textbf{3.Proof of estimates for outer regions:}

The Assumption \ref{Cond1} allows us to derive the interval of the location of inner transition layer $[\hat{x}^{l}(t,\mu),\hat{x}^{r}(t,\mu)]$.

From the  Assumption \ref{Cond1}, we deduce:
\begin{equation*}
|Q_{0}^{l,r}(0,t)| = |\varphi(x_{0}(t)) - \varphi^{l,r}(x_{0}(t))| = \frac{1}{2}(\varphi^{r}(x_{0}(t)) - \varphi^{l}(x_{0}(t))) \geq \frac{1}{2}(u^r - u^l) >\mu^2,
\end{equation*}
and $|Q_{0}^{l,r}(\xi,t)| \to 0$ as $\xi \to \mp \infty$.
Given that $0<\mu < 1$ is a fixed number, $|Q_{0}^{l}(\xi,t)|$ is increasing function and $|Q_{0}^{r}(\xi,t)|$ is decreasing function, and $\xi = (x - x_{t.p.}(t)) / \mu$, there exist $\hat{x}^{l,r}(t,\mu)$ such that:
on the interval $[0, \hat{x}^{l}(t,\mu)]$,  $|Q_{0}^{l}(\xi, t)| \leq \mu^2$ for every $t$;
on the interval $[\hat{x}^{r}(t,\mu), 1]$,  $|Q_{0}^{r}(\xi, t)| \leq \mu^2$ for every $t$.

Furthermore, at the points $\hat{x}^{l,r}(t,\mu)$:
\begin{equation*}\label{Qlr-mu}
|Q_{0}^{l}(\xi(\hat{x}^{l}(t,\mu)), t)| = \mu^2, \quad |Q_{0}^{r}(\xi(\hat{x}^{r}(t,\mu)), t)| = \mu^2.
\end{equation*}

For the right side, starting from the explicit form for \( Q_0^l(\xi_0,t) \):
\[
Q_0^l(\xi_0,t) =  \frac{K_1^l}{1-K_2^l \exp(\varphi^l(x_0(t))\xi_0)} ,
\]
where \( \xi_0 = \frac{x - x_0(t)}{\mu} \), $ K_1^l= -2\varphi^l(x_0(t)) $, $ K_2^l=\frac{\varphi^r(x_0(t))+3\varphi^l(x_0(t))}{\varphi^r(x_0(t))-\varphi^l(x_0(t))} $.

At the point \( \hat{x}^l(t,\mu) \), we obtain:
\[
\mu^2 = \frac{K_1^l}{1-K_2^l \exp(\varphi^l(x_0(t)) \frac{\hat{x}^l(t,\mu) - x_0(t)}{\mu})}.
\]

Solving for \( \hat{x}^l(t,\mu) \), we find:
\begin{equation}\label{hatxl}
\hat{x}^l(t,\mu) = x_0(t) - \frac{\mu}{\varphi^l(x_0(t))} \ln \left(\frac{K_2^l \mu^2}{-K_1^l+\mu^2}\right).
\end{equation}

Similarly, we obtain the estimate for the right side:

\begin{equation}\label{hatxr}
\hat{x}^r(t,\mu) = x_0(t) - \frac{\mu}{\varphi^r(x_0(t))} \ln \left(\frac{K_2^r \mu^2}{-K_1^r+\mu^2}\right),
\end{equation}
where  $ K_1^r= -2\varphi^r(x_0(t)) $, $ K_2^r=\frac{\varphi^l(x_0(t))+3\varphi^r(x_0(t))}{\varphi^l(x_0(t))-\varphi^r(x_0(t))}. $

Now, focusing on the left side of the inner layer, the proof for the right side is similar. We need to prove that for  \( x \in \left[0, \hat{x}^l(t,\mu)\right] \), where \( \hat{x}^r(t,\mu) -\hat{x}^l(t,\mu) = \mathcal{O}(\mu |\ln\mu|) \):
\[
|u(x,t)-\varphi^l(x)| \leq C\mu, \quad \left|\frac{\partial u}{\partial x}(x,t) - \frac{d\varphi^l}{dx}(x)\right| \leq C\mu.
\]

From the construction of \( U_0(x,t) \), we know that for \( x < x_0(t) \):
\[
U_0(x,t) = \varphi^l(x) + Q_0^l(\xi_0,t),
\]
where \( \xi_0 = \frac{x-x_0(t)}{\mu} \). 

Now, using estimate \eqref{estim1} with $n=0$, we have for \( x \in \left[0,\hat{x}^l(t,\mu)\right] \):
\begin{align*}
|u(x,t)-\varphi^l(x)| &\leq |u(x,t) - U_0(x,t,\mu)| + |U_0(x,t,\mu) - \varphi^l(x)| \\
&\leq \mathcal{O}(\mu) + |Q_0^l(\xi_0,t)| \\
&\leq \mathcal{O}(\mu) +\mu^2 \\
&\leq C\mu,
\end{align*}
where constant \( C \) is not depend from $x,t, \mu $.

For \( x \leq \hat{x}^l(t,\mu) \),  we have $ \xi_0 \leq \frac{\hat{x}^l(t,\mu)-x_0(t)}{\mu} = - \frac{1}{\varphi^l(x_0(t))} \ln \left(\frac{K_2^l \mu^2}{-K_1^l+\mu^2}\right) $.

Since $\frac{\partial Q_0^l}{\partial \xi_0}(\xi_0,t)$ is an exponentially increasing function, its maximum on the interval \( x \in \left[0,\hat{x}^l(t,\mu)\right] \) will be at  \( x = \hat{x}^l(t,\mu)  \). Thus, from the explicit equation for $Q_0^l$:
\[
\frac{\partial Q_0^l}{\partial x}(\xi_0,t)  = \frac{1}{\mu} K_3 e^{\varphi^l(x_0) \xi_0} (Q_0^l)^2 \leq K_3  \left(\frac{K_2^l \mu^2}{-K_1^l+\mu^2}\right)^{-1} \mu^{3},
\]
where $K_3=\frac{\varphi^r(x_0(t))+3\varphi^l(x_0(t))}{2(\varphi^l(x_0(t))-\varphi^r(x_0(t)))}$.

Therefore, we obtain for \( x \in \left[0,\hat{x}^l(t,\mu)\right] \):
\begin{align*}
\left|\frac{\partial u}{\partial x}(x,t) - \frac{d\varphi^l}{dx}(x)\right| &\leq \left|\frac{\partial u}{\partial x}(x,t) - \frac{\partial U_0}{\partial x}(x,t,\mu)\right| + \left|\frac{\partial U_0}{\partial x}(x,t,\mu) - \frac{d\varphi^l}{dx}(x)\right| \\
&\leq \mathcal{O}(\mu) + \frac{\partial Q_0^l}{\partial x}(\xi_0,t) \\
&\leq \mathcal{O}(\mu) + K_3  \left(\frac{-K_1^l+\mu^2}{K_2^l }\right) \mu \\
&\leq C\mu,
\end{align*}
where constant \( C \) is not depend from $x,t, \mu $.

\textbf{4. Proof of estimate \eqref{estim2}:}

We begin by analyzing \( u(x, t) - \varphi(x, t) \) outside the transition layer \( [\hat{x}^l(t, \mu), \hat{x}^r(t, \mu)] \), where the function $\varphi(x,t) $ is defined as equation \eqref{barUatTP}:  \( \varphi(x,t) = \frac{1}{2} \left( \varphi^l(x,t) + \varphi^r(x,t) \right) \), while $\hat{x}^l(t, \mu)$ and $ \hat{x}^r(t, \mu)$ are defined in \eqref{hatxl} and \eqref{hatxr}, respectively.

Case 1: \( x \leq \hat{x}^l(t, \mu) \). According to the estimate \eqref{leftestimatevarphi}, we have
\[
|u(x, t) - \varphi^l(x, t)| \leq C \mu, \quad \text{for } x \leq \hat{x}^l(t, \mu),
\]
where \( C \) is a constant independent of \( \mu \), \( x \), and \( t \).

On the other hand, let us consider the following quantity:
\[
\delta(x,t) = \varphi^r(x, t) - \varphi^l(x, t).
\]

According to Assumption \ref{Cond1}, the function $\delta(x,t)$ satisfies:
$$ \delta(x,t) \geq 2\mu^\theta > 0, \  \text{where} \ \theta \in (0,1), \quad \forall x \in [0,1], \forall t \in \mathbb{R}_+. $$

Therefore,  for \( \mu^{(1-\theta)} < \frac{1}{C}\), we have:
\[
u(x, t) - \varphi(x, t) = [u(x, t) - \varphi^l(x, t)] - \frac{1}{2} \delta(x,t) \leq C \mu - \mu^\theta < 0, \ \text{for} \ x \leq \hat{x}^l(t, \mu) .
\]

Case 2:  \( x \geq \hat{x}^r(t, \mu) \). The estimate \eqref{rightestimatevarphi} gives:
\[
|u(x, t) - \varphi^r(x, t)| \leq C \mu, \quad \text{for } x \geq \hat{x}^r(t, \mu).
\]

Similarly, for  \( \mu^{(1-\theta)} < \frac{1}{C}\), we obtain:
\[
u(x, t) - \varphi(x, t) = [u(x, t) - \varphi^r(x, t)] + \frac{1}{2} \delta(x,t) \geq -C \mu + \mu^\theta > 0, \ \text{for} \  x \geq \hat{x}^r(t, \mu) .
\]

Since both \( u(x, t) \) and \( \varphi(x, t) \) are continuous in \( x \) on \( [0,1] \), \( u(x, t) - \varphi(x, t) \) is also continuous in \( x \) on \( [0,1] \). Furthermore, because \( u(x, t) - \varphi(x, t) \) is continuous on \( [\hat{x}^l(t, \mu), \hat{x}^r(t, \mu)] \) and changes sign from negative to positive, there exists a point \( x_{t.p.}(t) \in (\hat{x}^l(t, \mu), \hat{x}^r(t, \mu)) \) such that:
\[
u(x_{t.p.}(t), t) - \varphi(x_{t.p.}(t), t) = 0.
\]

Thus, for \( 0 < \mu \ll 1 \) (since \( x_0 \)  lies within the interval \( (\hat{x}^l(t, \mu), \hat{x}^r(t, \mu)) \) and $\Delta x =\hat{x}^r(t, \mu)-\hat{x}^l(t, \mu) = \mathcal{O}(\mu |\ln\mu|)$), we have:
\begin{equation*} \label{xtp_in_transition}
|x_0 - x_{t.p}| < C \mu |\ln \mu |, \quad \forall t \in \mathbb{R}_+,
\end{equation*}
where \( C \) is a constant independent of \( \mu \), \( x \), and \( t \).

This completes the proof of Theorem \ref{asymptEstimates}.
\end{proof}

\end{document}